\documentclass[11pt,letterpaper]{amsart}
\usepackage{amsmath,amsthm,amsfonts,amscd,amssymb,eucal,latexsym,mathrsfs,stmaryrd,enumitem,bm,mathtools}
\usepackage[dvipsnames]{xcolor}
\usepackage[hidelinks]{hyperref}
\usepackage{microtype}
\usepackage{dirtytalk} 
\usepackage{soul} 
\sethlcolor{pink}

\newtheorem{theorem}{Theorem}[section]

\newtheorem{lemma}[theorem]{Lemma}

\theoremstyle{definition}
\newtheorem{definition}[theorem]{Definition}
\newtheorem{remark}[theorem]{Remark}
\newtheorem{notation}[theorem]{Notation}

\newcounter{theoremintro}
\newtheorem{theoremi}[theoremintro]{Theorem}

\newcommand{\overbar}[1]{\mkern 1.5mu\overline{\mkern-1.5mu#1\mkern-1.5mu}\mkern 1.5mu}

\newcommand{\cZ}{{\mathcal Z}}

\newcommand{\sI}{{\mathscr I}}
\newcommand{\sK}{{\mathscr K}}

\newcommand{\sP}{{\mathscr P}}
\newcommand{\sQ}{{\mathscr Q}}

\newcommand{\sT}{{\mathscr T}}
\newcommand{\sU}{{\mathscr U}}
\newcommand{\sV}{{\mathscr V}}
\newcommand{\sW}{{\mathscr W}}

\newcommand{\fA}{{\sf A}}
\newcommand{\fF}{{\sf F}}
\newcommand{\fS}{{\sf S}}
\newcommand{\Eb}{\mathbb{E}}
\newcommand{\Cb}{{\mathbb C}}
\newcommand{\Zb}{{\mathbb Z}}
\newcommand{\Nb}{{\mathbb N}}
\newcommand{\Pb}{{\mathbb P}}

\newcommand{\eps}{\varepsilon}
\newcommand{\unit}{1}
\newcommand{\comp}{c}
\newcommand{\entry}{\bullet}

\numberwithin{equation}{section}

\DeclareMathOperator{\supp}{supp}

\DeclareMathOperator{\id}{id}

\DeclareMathOperator{\dF}{F}

\DeclareMathOperator{\trs}{tr}

\DeclarePairedDelimiter{\norm}{\lVert}{\rVert}

\allowdisplaybreaks

\begin{document}

\title{Topological full groups and stable rank one}

\author{David Kerr}
\address{David Kerr,
Mathematisches Institut,
University of M{\"u}nster, 
Einsteinstr.\ 62, 
48149 M{\"u}nster, Germany}
\email{kerrd@uni-muenster.de}

\author{Spyridon Petrakos}
\address{Spyridon Petrakos,
Department of Mathematical Sciences,
Chalmers University of Technology and University of Gothenburg,
Chalmers Tv\"{a}rgata 3, SE-412 96 G\"{o}teborg, Sweden}
\email{petrakos@chalmers.se}

\date{\today}

\begin{abstract}
We establish stable rank one for the reduced group C$^*$-algebras of the C$^*$-simple topological full groups 
and dynamical alternating groups constructed by Kerr and Tucker-Drob. 
The proof relies on both type II$_1$ and type III
phenomena, in the first case via the use of F{\o}lner towers and in the second via Ozawa's recent results on selflessness 
as applied to direct products of  
free products.
\end{abstract}

\maketitle

\section{Introduction}

Despite the deep and productive analogies that take hold between von Neumann algebras and C$^*$-algebras
when these become highly noncommutative, the study of simple C$^*$-algebras remains beset by many complications 
of a topological nature. The comparability and divisibility of projections that are both automatic in factors of type II or III cannot even be guaranteed in approximate form in simple unital C$^*$-algebras, and so one must postulate strict comparison and almost divisibility as properties whose validity must be tested for in examples.

Even a ``type II$_1$'' expression of zero-dimensionality like stable rank one (i.e., the density of invertible elements)
can fail in simple stably finite AH algebras \cite{Vil99}. 
All of these attributes would seem (with the necessary restriction of stable finiteness for stable rank one)
to be prerequisites for the type of fine-scale analysis 
that a Connes-type structure and classification theory would demand, 
and indeed if one limits one's scope to the ``II$_1$ factor'' setting of simple separable finite C$^*$-algebras, 
as we do in this paper, then one can 
derive all of them from the McDuff-like property of $\cZ$-stability 
that has become a cornerstone of the Elliott classification program \cite{Ror04}\footnote{For strict comparison and almost divisibility the assumption of finiteness is not necessary.}.

Absent $\cZ$-stability, it becomes a challenge to assay a simple unital finite C$^*$-algebra
for any of the properties of strict comparison, almost divisibility, and stable rank one.
In practice stable rank one has been more amenable to verification
but also plays a somewhat less central (if sometimes surprising) role in issues around classification,
while strict comparison has proven especially recalcitrant outside the scope of nuclearity, although like stable rank one it 
can be directly monitored and controlled in simple AH algebras and fails to hold for some examples within this class \cite{Vil98}
(see \cite{SchTikWhi25} for a survey). 
These differences are reflected in the partial two-out-of-three sociology that has been observed 
among the three properties: for simple separable unital nonelementary finite
$C^*$-algebras, almost divisibility and stable rank one each follow from the other two properties 
\cite[Corollary~8.12]{Thi20}\cite[Corollary~1.3]{Lin25}.

The present paper is concerned with stable rank and strict comparison in simple unital reduced
group C$^*$-algebras. These C$^*$-algebras possess a unique tracial state \cite{BreKalKenOza17} and are never nuclear
(the simplicity is in fact equivalent to the nonexistence of amenable uniformly recurrent subgroups \cite{Ken15})
and are thus natural and interesting test cases to consider as one crosses the threshold into nonamenability.
They are also very different in nature, at least from the viewpoint of their construction,
from the tracial C$^*$-algebras that arise as reduced crossed products of topologically free minimal actions
on compact metrizable spaces. Such crossed products are nuclear as soon as the group is amenable,
and in that setting one can frequently verify one or both of the following two Følner tiling properties,
the first of which implies stable rank one \cite{LiNiu20} and the second of which implies $\cZ$-stability \cite{Ker20}:
\begin{enumerate}[label=(\roman*)]
\item\label{I-URPC} URPC (uniform Rokhlin property $+$ comparison), which combines
dynamical comparison with the existence of open castles with Følner shapes and remainders
that are are uniformly small in measure,

\item\label{I-almost finite} almost finiteness, which is the same as URPC but with the additional condition that the tower levels be small in diameter.
\end{enumerate}
For a large class of amenable acting groups, including elementary amenable groups and groups of subexponential growth, 
almost finiteness is known to hold as soon as the action is free and the phase space
has finite covering dimension \cite{KerNar21,Nar24,DowZha23,NarPet25}, but the picture is most complete for
free minimal $\Zb^d$-actions, where URPC is automatic (this was shown in \cite{Niu24} using a tiling result from \cite{GutLinTsu16})
and almost finiteness, the small boundary property, and mean dimension zero are all equivalent 
(for the equivalence of the first two combine \cite{Nar22} and Theorem~6.1 of \cite{KerSza20} and for the equivalence
of the last two see \cite{GutLinTsu16}).
Moreover, when the mean dimension is nonzero it is possible for the crossed product not to have strict comparison \cite{GioKer10}.
This gives the impression that stable rank one is a more robust condition than strict comparison, 
at least for simple crossed products coming from actions on spaces, where the possibility remains open that
the stable rank is always equal to one.
Going further in this direction, recent work of Bell, Geffen, and the first author shows that Følner-like tower structure can 
also be applied through a Baire category lens to establish stable rank one in some nonnuclear tracial crossed products, 
notably those coming from a generic weakly mixing topological free minimal action of a free group
on the Cantor set preserving at least one Borel probability measure. What happens generically in the free group case  
is that one can build Følner towers from one of the generators in a way that 
confines the freeness of the relations with the other generators to much smaller dynamical scales,
thereby ensuring the approximate invariance of the towers with respect to all generators.
This Baire category maneuver allows one to bypass the unresolved problem of whether
any of these actions have dynamical comparison, which is a critical ingredient in \ref{I-URPC} and \ref{I-almost finite} above.

It would seem that these ``type II$_1$'' Følner techniques should not be applicable in the setting of simple
group C$^*$-algebras if we understand C$^*$-simplicity to be a kind of saturation by nonamenability.
Indeed progress to date has relied in one way or another on group-geometric phenomena of a type III flavour, 
specifically ones for which the prototype is precisely the freeness that we were wanting to suppress 
in the dynamical framework above. The pioneering step in this direction was
Dykema, Haagerup, and Rørdam's use of a rapid-decay-type argument to establish stable rank one
for the reduced group C$^*$-algebras of nonelementary free products, and in particular of free groups \cite{DykHaaRor97}.
The class of stable rank one groups was subsequently expanded so as to eventually come to include all acylindrically 
hyperbolic groups \cite{DykHar99,GerOsi20,Rau25}. Until very recently not much was known 
concerning strict comparison, including the question of whether $C^*_\lambda (F_2 )$ has it.
Rørdam observed early on that free probability arguments from \cite{DykRor00} can be applied to deduce
strict comparison for nontrivial infinite free products such as $F_\infty$ 
(see Proposition~6.3.2 of \cite{Rob25}). 
One can also easily come up with 
recursive constructions that will produce
a simple reduced group C$^*$-algebra which is $\cZ$-stable, although verifying $\cZ$-stability
in less artificial examples becomes extremely difficult due to the absence of Matui--Sato technology in the nonnunclear world
(compare the use of \cite{HirOro13} in the derivation of $\cZ$-stability from almost finiteness in Theorem~12.4 in \cite{Ker20}) 
and it will already even fail if the group is not inner amenable.
A breakthrough came in 2024 when 
Amrutam, Gao, Kunnawalkam Elayavalli, and Patchell proved that the reduced C$^*$-algebra of free groups,
and more generally of all acylindrically hypebolic groups with trivial finite radical and rapid decay,
satisfy the free absorption property of selflessness that was introduced by Robert and observed by him to imply
strict comparison via the above-mentioned result of Rørdam on infinite free products \cite{Rob25}.
Ozawa later identified dynamical and combinatorial conditions
that imply selflessness and that can be applied in a wide range of situations, 
including all nonelementary free products.
We also now know, owing to Lin's theorem from \cite{Lin25} cited earlier and the fact that
C$^*$-simplicity entails unique trace, that, for simple unital reduced group C$^*$-algebras,
strict comparison implies stable rank one.

Our aim in this paper is to address the family of C$^*$-simple groups constructed by the first author and Tucker-Drob
in Theorem~8.7 of \cite{KerTuc23}
as topological full groups and dynamical alternating groups of topologically free minimal subshift actions
of amenable groups on the Cantor set. Many of the dynamical alternating groups in this collection
are simple and finitely generated, properties that follow as soon as the group is finitely generated and
the action is topologically free, minimal, and expansive (i.e., a subshift over a finite alphabet) \cite{Nek17}. The assumption on the
acting group that supports the construction in \cite{KerTuc23} (a tiling property called {\it property ID}) 
incorporates an entropy condition that is not directly relevant for us here but was included there so as 
to be able to produce an uncountable infinity of examples for a given acting group by varying the dynamical entropy, which 
is an invariant of continuous orbit equivalence and consequently also of the dynamical alternating group.
We will recall the construction in its more general form in Section~\ref{S-family}.
It was inspired by the original $\Zb^2$ example in \cite{EleMon13}, 
and, like that example, arranges for nonamenability by using an idea of van Douwen that embeds the free product 
$\Zb_2 * \Zb_2 * \Zb_2$ using a ping-pong argument over a full shift along a copy of $\Zb$ in the group \cite{vDo90}.
See also \cite{Szo21} for a related construction that geometrically replicates a full shift in a way that 
applies to all non-virtually cyclic amenable groups.
When the acting group is virtually cyclic, the presence (real or virtual) of a ``one-dimensional'' full shift 
that unites all of these examples is 
incompatible with minimality, and it is a remarkable fact that free minimal actions of virtually cyclic groups on the Cantor set
always produce an amenable topological full group \cite{JusMon13,Szo21}.

In \cite{KerTuc23} the first author and Tucker-Drob employed Følner towers and a probabilistic argument inspired by \cite{KecTsa08}
to show that, for every minimal topological free action of a countable amenable group on the Cantor set,
the group von Neumann algebras of the topological full group and dynamical alternating group
have property Gamma. Later the two authors of the present paper discovered a non-probabilistic approach 
(but still using Følner towers) that strengthens the property Gamma conclusion to McDuffness.
We do not know however whether the reduced group C$^*$-algebra of any of these groups is $\cZ$-stable. 
We also do not know whether these group C$^*$-algebras are ever selfless or have strict comparison.
On the other hand we have been able to establish stable rank one for the 
C$^*$-simple examples from \cite{KerTuc23}. The underlying actions all possess
a property we call having a {\it specification ridge}
(Definition~\ref{D-specification}), which is sufficient to reach the conclusion:

\begin{theoremi}\label{T-stable rank one}
Let $\Gamma$ be a torsion-free countably infinite amenable discrete group and let $X$ be the Cantor set.
Let $q$ be an integer greater than $3$ and
let $\Gamma\curvearrowright X \subseteq \{ 1,\dots , q\}^\Gamma$ 
be a minimal topologically free right subshift action with a specification ridge.
Let $G$ be a subgroup of the topological full group $\fF (\Gamma,X)$
containing the alternating group $\fA (\Gamma , X)$. Then the reduced group C$^*$-algebra $C^*_\lambda (G)$
has stable rank one.
\end{theoremi}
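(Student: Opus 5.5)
The approach combines the two mechanisms announced in the abstract: a ``type II$_1$'' reduction via Følner towers and a ``type III'' input via Ozawa's selflessness for direct products of free products. Stable rank one is a local approximation property, so it suffices to approximate a fixed element $a\in C^*_\lambda(G)$, which we may take to lie in the group ring $\Cb G$ with finite support $F\subseteq G$, by invertible elements. We will show that the finite set $F$, and hence $a$, can be moved by a conjugation, followed by a small perturbation, into a subalgebra $C^*_\lambda(H)\subseteq C^*_\lambda(G)$. Here $H\le G$ is a subgroup isomorphic to (or surjecting appropriately onto) a direct product $\prod_{i} (A_i * B_i)$ of nonelementary free products. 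By Ozawa's results such algebras are selfless, hence have strict comparison. With unique trace, Lin's theorem then gives them stable rank one. Invertibility in a unital subalgebra implies invertibility in $C^*_\lambda(G)$, and this closes the argument.

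First step: use amenability of $\Gamma$ and minimality to build clopen Følner towers with shapes $S$ and bases $V$. Each element of $F$, being in the topological full group, acts on most tower levels as translation within the shape. Off a set of small measure it therefore acts as a permutation of the levels. This lets us write each $g\in F$, up to a small error, as an element of a ``tower permutation'' group.

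Second step, where the specification ridge enters: inside each tower, or on a fixed small clopen region along the ridge, we realize the van Douwen ping-pong copy of $\Zb_2*\Zb_2*\Zb_2$. Here $q>3$ supplies enough symbols to encode the three involutions and the specification property lets us freely prescribe patterns along the ridge. Running this on several disjoint regions gives commuting copies of free products in $\fA(\Gamma,X)\le G$.

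The main obstacle is passing from ``$F$ is approximately contained in a tower group'' to a genuine norm approximation of $a$ inside $C^*_\lambda(H)$. A small-measure defect in the dynamics does not automatically yield small operator norm in the left regular representation. My first attempt would avoid norm perturbation altogether and use a trace/measure argument. One proves that $a$ lies in a subalgebra $C^*_\lambda(K)$, where $K=\langle F\cup H\rangle$ is itself an amalgam of the form (finite tower-permutation group)$\times$(free product pieces), possibly after conjugating by an element of $\fA$. The exact containment would be arranged by choosing the tower so that the elements of $F$ act exactly as permutations on tower levels, which is possible since they are finitely many clopen-piecewise translations. Selflessness should pass to $M_n(C^*_\lambda(\prod(A_i*B_i)))$ and to tensor products with finite-dimensional algebras. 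From that we get strict comparison and unique trace for $C^*_\lambda(K)$, and Lin's theorem finishes. The delicate point to check is C$^*$-simplicity and Ozawa's hypotheses for this intermediate group $K$. I would verify these through topological freeness on the ridge, so that each free product factor is nonelementary and has trivial amenable radical.
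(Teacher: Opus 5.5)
There is a genuine gap at the step everything rests on. You never get $a$ into a subalgebra $C^*_\lambda(K)$ with $K$ a product of a finite group and nonelementary free products, either exactly or in norm after a unitary conjugation, and in general this cannot be done.

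\emph{The exact version fails.} Elements of $\fF(\Gamma,X)$ cannot be arranged to act exactly as permutations of the levels of clopen towers. F{\o}lner shapes give only approximate invariance, and on boundary levels the local translations leave the shape. If $\supp a$ did lie in a tower permutation group $\fS(S,V)$, or a finite product of such, then $\langle \supp a\rangle$ would be finite. In fact it is typically infinite and can be all of $G$. For instance, when $\Gamma$ is finitely generated, $\fA(\Gamma,X)$ is finitely generated and simple by Nekrashevych. For $G=\fA(\Gamma,X)$ you would then get $K=G$, which is circular. It is also false, since a simple group is neither a nontrivial direct product nor a nontrivial free product.

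\emph{The approximate version fails for the reason you flag yourself.} For any subgroup $H\leq G$, any $g\in G\setminus H$ and any $x\in C^*_\lambda(H)$, one has $\|u_g-x\|\geq\|\delta_g-x\delta_e\|_2\geq 1$. So small-measure defects are norm-large, and nothing in the proposal explains how a unitary conjugation would repair this.

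The missing idea is that $a$ never has to enter a selfless algebra. Strict comparison is used only to compare \emph{projections}, while $a$ is handled by a R{\o}rdam-type rotation to nilpotence. The paper's route is as follows.
\begin{enumerate}
\item Simplicity and Lemma~3.5 of R{\o}rdam reduce to $ab=ba=0$ with $b\geq 0$ nonzero.
\item Lemma~\ref{L-zero division} rotates $a$ by a self-adjoint unitary so that, after a small perturbation, it annihilates on both sides the indicator $1_O$ of a cylinder set. This cylinder set lives in a Bernoulli structure $C(Z^F)\subseteq\Cb\fA_S^F$ built on a striated F{\o}lner tower.
\item The F{\o}lner input is not used to trap $\supp a$. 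It guarantees that conjugation by elements of $\supp a$ moves cylinder indicators over $F_K$ by permutations of $F$.
\item Lemmas~\ref{L-two sets}, \ref{L-product sets} and \ref{L-combinatorial} then produce $n^2$ equal-measure, approximately invariant sets $V_{i,j}$. The set $V_1$, the remainder $R$ and the boundary $B$ are each dominated in measure by disjoint pieces of $O$.
\item Only here does selflessness enter. Lemma~\ref{L-free products} embeds free products $\fA_S*\Zb_2$ whose first factor is the alternating group of the tower itself, so $\Cb\fA_S^F\subseteq C^*_\lambda((\fA_S*\Zb_2)^F)$. That algebra has strict comparison by Ozawa and Robert.
\item Strict comparison yields partial isometries moving $1_{V_1}$, $1_R$ and $1_B$ under $1_O$. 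This is impossible inside $\Cb\fA_S^F$ alone, because of the summand of the trivial representation.
\item Unitaries $z_1,z_2\in\Cb\fA_S^F$ then turn the resulting element into a strictly upper triangular, hence nilpotent, block matrix.
\end{enumerate}
Disjoint copies of $\Zb_2*\Zb_2*\Zb_2$, as in your second step, contain neither $\supp a$ nor the projections to be compared, so they cannot play this role.
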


The reduced group C$^*$-algebras in the theorem are all simple, as explained in Remark~\ref{R-simple}.

Our strategy for establishing Theorem~\ref{T-stable rank one}
is to simulate the dynamical property of square divisibility from \cite{BelGefKer25} in a way that will 
similarly yield stable rank one
via a Rørdam-type argument \cite{Ror91} that involves 
the near block diagonalization of zero divisors of a special form (in the spirit of \cite{LiNiu20}) 
and unitary rotation to a nilpotent element.
We proceed as in \cite{KerTuc23} by using a ``striated'' Følner tower for the $\Gamma$-action to create
a permutational Bernoulli structure whose phase space lives within $C^*_\lambda (G)$ as a spectral object
(one may think of it as an ``exponentiation'' of the Følner tower along the striation) 
and then applying some probability theory
to produce a uniform near partition of the Bernoulli space that is approximately invariant
under a prescribed finite set of elements in $G$.
We require the near partition 
to be much finer than the two-member partition that sufficed for the purposes of \cite{KerTuc23},
and so we need to manufacture the Bernoulli structure within a permutational wreath product
whose base is a very large finite alternating group, like the ones used in \cite{KerPet25}.
To produce the ersatz square divisibility
some care is required in the construction, manipulation, and double-indexing of the near partition,
in particular to negotiate issues around approximate invariance and zero division that are either 
not present or handled differently in \cite{BelGefKer25}.

The point where our situation critically departs from \cite{BelGefKer25} is that we cannot replicate the subequivalences 
in the definition of square divisibility in a dynamical way within the Bernoulli structure, or even C$^*$-algebraically within
the ambient wreath product.
This is due to the presence of the trivial representation of the ambient wreath product,
which sits underneath one of the sets in the Bernoulli space and cannot be budged
within the group algebra of the wreath product, where it appears as a one-dimensional direct summand.
If the entire group $G$ were amenable, its trivial representation would also
sit underneath the same set, 
which we could then not even move within 
$C^*_\lambda (G)$ itself, causing the whole argument to break down.
This gives a nice dynamical illustration of how, in the context of groups, 
nonamenability is related to the ability to move around projections and other positive elements in the C$^*$-algebra.
In its most permissive form this transportability of positive elements 
is associated with C$^*$-algebraic simplicity (although strict comparison still sometimes fails in this situation), 
and indeed it remains unclear how much we can move around the trivial 
representation of the wreath product 
if we merely assume $G$ to be nonamenable.\footnote{It was shown in \cite{Sca23} that, for minimal actions, nonamenability of the 
dynamical alternating group is actually equivalent to its C$^*$-simplicity as well as to the C$^*$-simplicity of the topological full group,
but we have been unable to exploit this fact or its proof.}
It is at this juncture that we invoke C$^*$-simplicity through the specific form of the construction
that was devised to produce it in \cite{KerTuc23}. 
Using a variation of the van Douwen ping-pong argument,
we embed into $G$ a collection of nonelementary free products whose first factors are the copies of
the alternating group forming the base of the wreath product. These embedded free products are configured so as to
have pairwise disjoint supports in the topological full group, so that their direct product $G_0$ also
forms a subgroup of $G$. The group C$^*$-algebra $C^*_\lambda (G_0)$ will then
be selfless by the previously mentioned work of Ozawa \cite{Oza25}. We can consequently exploit
the strict comparison that ensues from selflessness to realize the desired subequivalences 
C$^*$-algebraically within $C^*_\lambda (G_0)$, which is sufficient
to generate the near block diagonalization that leads to stable rank one. 

The proof of Theorem~\ref{T-stable rank one} appears in Section~\ref{S-theorem} and relies
on the lemmas established in Sections~\ref{S-free products}, \ref{S-zero division}, and \ref{S-combinatorial}.
Section~\ref{S-preliminaries} reviews some definitions and basic facts concerning topological full groups,
while Section~\ref{S-family} describes the family of actions that are the subject of Theorem~\ref{T-stable rank one}.
Section~\ref{S-construction} describes the construction of striated towers that will be needed 
both in Section~\ref{S-zero division} and in the proof of Theorem~\ref{T-stable rank one} in Section~\ref{S-theorem}.
\medskip

\noindent{\it Acknowledgements.}
This project was supported by the Deutsche Forschungsgemeinschaft 
(DFG, German Research Foundation) under Germany's Excellence Strategy EXC 2044/2-390685587, 
Mathematics M{\"u}nster: Dynamics--Geometry--Structure, by the SFB 1442 of the DFG,
by a Simons Foundation grant (award no.\ SFI-MPS-T-Institutes-00010825), and 
by State Treasury funds as part of a task commissioned by the Minister of Science and Higher Education under the project “Organization of the Simons Semesters at the Banach Center - New Energies in 2026-2028” (agreement no.\ MNiSW/2025/DAP/491).
The second author was funded by the Knut and Alice Wallenberg Foundation through a postdoc grant.

\section{Some general notation and terminology}

The identity element of a group will always be denoted by $e$. We write $A\Subset B$ to mean that $A$ is a finite subset of $B$. The symmetric and alternating groups over a finite set $F$ are written $\fS_F$ and $\fA_F$, respectively.

Let $E\subseteq F$ and $Y$ be finite sets. We denote by $\pi_E$ the coordinate projection map
$Y^F \to Y^E$. We write $A\subseteq_E Y^F$ if $A$ is a subset of $Y^F$
with the property that $A = \pi_E^{-1} (\pi_E (A))$, i.e., membership of a point in $A$ is determined by its
coordinates over $E$. We refer to a point in $Y^F$ as a {\it configuration} over $F$.
A {\it cylinder set} $A$ in $Y^F$ is a subset of the form $\{ y\in Y^F : y|_E = w \}$
for some $E\Subset F$ and $w\in Y^E$, in which case $E$ is called the {\it window} 
and $A$ is said to be determined by $w$. We will frequently use without notational comment
the canonical action of the of the symmetric group $\fS_F$ on $Z^F$,
as defined by $\sigma (z_s )_{s\in F} = (z_{\sigma^{-1} (s)})_{s\in F}$. 

For a group $\Gamma$ and a $q\in\Nb$ we define the right shift action 
$\Gamma\curvearrowright\{ 1,\dots  q\}^\Gamma$ by $(sx)(t) = x(ts)$ for all $s,t\in\Gamma$ 
and $x\in\{ 1,\dots , q\}^\Gamma$. The restriction of such an action to a closed $\Gamma$-invariant
subset we call a {\it right subshift action}.

For a discrete group $G$, the group ring $\Cb G$ is viewed as a $^*$-subalgebra
of the reduced group C$^*$-algebra $C^*_\lambda (G)$ and accordingly its elements have the form
$\sum_{g\in L} \beta_g u_g$ for some $L\Subset G$ and scalar coefficients $\beta_g$, where 
$u_g$ for $g\in G$ are the canonical unitaries.
The {\it support} of an element $b$ in $\Cb G$ is the minimal set $L\Subset G$ 
for which $b$ can be expressed as $\sum_{g\in L} \beta_g u_g$. 
The left regular representation of $G$ induces a faithful tracial state on $C^*_\lambda (G)$,
which we will invariably denote by $\tau$.

\section{Preliminaries on towers and topological full groups}\label{S-preliminaries}

Let $\Gamma\curvearrowright X$ be an action of a countable discrete group on the Cantor set
(all such actions are assumed to be continuous). The {\it topological full group} of the action,
written $\fF (\Gamma , X)$, is the discrete group of all homeomorphisms $h:X\to X$ 
for which there is a clopen partition $X = A_1 \sqcup\cdots\sqcup A_n$
and $s_1 , \dots , s_n \in\Gamma$ such that $hx = s_i x$ for all $i=1,\dots , n$ and $x\in A_i$.
This group is countable given that $\Gamma$ is countable and there are only countably many clopen partitions of $X$. 
The {\it support} of a element $h\in\fF (\Gamma , X)$ is the clopen set of all $x\in X$ such that $hx\neq x$.

By a {\it tower} for the action $\Gamma\curvearrowright X$ we mean a pair $(S,B)$ where $S$ is a finite subset of $\Gamma$ (the {\it shape})
and $B$ is a subset of $X$ (the {\it base}) such that the sets $sB$ for $s\in S$ (the {\it levels}) are pairwise disjoint.
The tower is {\it clopen} if $B$ is clopen, which by the continuity of the action is equivalent to all of the levels being clopen.

Canonically associated to a clopen tower $(S,B)$ is the embedding $\sigma\mapsto h_\sigma$ of the symmetric group 
over $S$ into $\fF (\Gamma , X)$ defined by
$h_\sigma x = \sigma (s)s^{-1} x$ for every $s\in S$ and $x\in sB$ and $h_\sigma x = x$ for every 
$x\in X\setminus SB$. 
We write $\fS (S,B)$ for the subgroup of $\fF (\Gamma , X)$ obtained under this embedding.
We also thereby obtain a copy $\fA (S,B) \subseteq \fS (S,B)$ of the alternating group over $S$.
The subgroup of $\fF (\Gamma , X)$ generated by 
such embedded finite alternating groups over all clopen towers with three levels is called the {\it dynamical alternating group}
and denoted by $\fA (\Gamma , X)$. This was introduced by Nekrashevych in \cite{Nek17}, where the 
following facts were established.
If the action has no finite orbits (in particular, if $\Gamma\curvearrowright X$ is minimal) then
$\fA (\Gamma , X)$ is equal to the subgroup
generated by the embedded finite alternating groups arising from all clopen towers.
The dynamical alternating group is contained in the commutator subgroup of $\fF (\Gamma , X)$
and is equal to it when the action is almost finite (this relies on \cite{Mat12}, as explained in Section~4 of \cite{Nek17}).
If the action $\Gamma\curvearrowright X$ is minimal then $\fA (\Gamma , X)$ is simple,
while if $\Gamma$ is finitely generated and $\Gamma\curvearrowright X$ has no orbits of
cardinality less than $5$ then $\fA (\Gamma , X)$ is finitely generated.

\section{The family of actions}\label{S-family}

For the proof of stable rank we will need to assume that our subshift actions possess a 
localized specification property contingent on
C$^*$-simplicity. The prototypes are the actions constructed in the proof 
of Theorem~9.7 in \cite{KerTuc23}. We will first recall the basic parameters of this construction
and then abstract the specification property that will be essential for the proof of Theorem~\ref{T-stable rank one}.

A {\it tiling} of $\Gamma$ is a finite collection $\sT = \{ (S_i ,C_i ) \}_{i\in I}$ of pairs consisting
of a finite set $S_i \subseteq \Gamma$ (a {\it shape}) and a set $C_i \subseteq\Gamma$ (a {\it centre set})
such that $\Gamma$ partitions as $\bigsqcup_{i\in I} \bigsqcup_{c\in C_i} S_i c$. 
We refer to $S_i c$ as a {\it tile} and to $C_i$ as the set of {\it tiling centres} for the shape $S_i$.

A sequence $(\sT_n )$ of tilings of $\Gamma$ is {\it tightly nested} if for every $n>1$ and
every pair $(S,C)$ in $\sT_n$ there is a partition $S = \bigsqcup_{j\in J} S_j d_j$ where 
the sets $S_j$ are shapes of $\sT_{n-1}$ and the $d_j$ are elements of $\Gamma$ such that 
for every $c\in C$ and $j\in J$ the element $cd_j$ is a tiling centre for $S_j$.
It is a {\it Følner tiling sequence} if for every finite set $K\subseteq\Gamma$ and $\delta > 0$
there is an $n_0 \in\Nb$ such that, for every $n\geq n_0$, each shape $S$ of $\sT_n$ is 
$(K,\delta )$-invariant in the sense that $|tS\Delta S| \leq \delta |S|$ for every $t\in K$.

Suppose now that $\Gamma$ is nontorsion and let $H$ be an infinite cyclic subgroup of $\Gamma$.
Let $(\sT_n )$ be a tightly nested Følner tiling sequence satisfying the following:
\begin{enumerate}[label=(\roman*)]
\item\label{I-centre} the centre sets of each tiling $\sT_n$ are syndetic,

\item\label{I-density} for every $\delta > 0$ there is an $n_0 \in\Nb$ such that $|H\cap T| \leq \delta |T|$
for every tile $T$ belonging to a tiling $\sT_n$ with $n \geq n_0$,

\item\label{I-cyclic} for every $n\in\Nb$ there are a pair $(S,C)\in \sT_n$ and a subgroup $H_0$ of $H$ 
contained in $C$ such that $H\subseteq \bigsqcup_{c\in H_0} Sc$.
\end{enumerate}
Observe that every subsequence of $(\sT_n )$ is a tightly nested Følner tiling sequence satisfying the same conditions.
Condition~\ref{I-centre} will enable us to arrange minimality in our subshift construction,
while conditions~\ref{I-density} and \ref{I-cyclic} will permit us to do this in such a way
that we can embed a full shift along $H$.
Note that \ref{I-density} cannot hold if $\Gamma$ is virtually cyclic, 
and if we assume $\Gamma$ to be finitely generated and not
virtually cyclic, then \ref{I-density} can actually be derived from \ref{I-cyclic} and the Følner hypothesis
on the tiling, as can be seen using the fact that $H$ has infinite index in $\Gamma$ in this case.

Let $q$ be an integer greater than $1$. 
We have the right shift action $\Gamma\curvearrowright \{ 1,\dots , q \}^\Gamma$
defined by $(sx)(t) = x(ts)$ for all $s,t\in\Gamma$ and $x\in\{ 1,\dots , q\}^\Gamma$.
We construct a family of minimal subshift actions 
$\Gamma\curvearrowright X\subseteq \{ 1,\dots , q \}^\Gamma$ as follows.

For $L\subseteq\Gamma$ we write $\pi_L : \{ 1,\dots , q \}^\Gamma \to \{ 1,\dots , q \}^L$ for the
coordinate projection map. A {\it box} is a subset of $\{ 1,\dots , q \}^\Gamma$ of the form
$\prod_{t\in\Gamma} A_t$ where each $A_s$ is a subset of $\{ 1,\dots , q \}$. For every
box $A = \prod_{t\in\Gamma} A_t$ and $T\subseteq\Gamma$ we set
\[
D_T (A) = \{ t\in T : A_t = \{ 1,\dots ,q \} \} .
\]
Let $A_0 = \{ 1,\dots , q\}^\Gamma \supseteq A_1 = \prod_{t\in\Gamma} A_{1,t}
\supseteq A_2 = \prod_{t\in\Gamma} A_{2,t} \supseteq\dots$ be a decreasing 
sequence of boxes such that the following hold for every $n\in\Nb$:
\begin{enumerate}[label=(\roman*),resume]
\item\label{I-pattern} for every $y\in A_{n+1}$, $x\in A_n$, $(S,C)\in\sT_n$, tile $T$ of $\sT_{n-1}$, and $c\in C$ there exists a $\tilde{c} \in\Gamma$
such that $T\tilde{c} \subseteq Sc$ and $y_{t\tilde{c}} = x_t$ for all $t\in T$,

\item\label{I-cd} for each $(S,C)\in\sT_n$ one has $A_{n,sc} = A_{n,sd}$ for all $s\in S$ and $c,d\in C$,

\item\label{I-H} $H\subseteq D_\Gamma (A_n )$,

\item\label{I-F} $\pi_{F_{n-1}} (A_n ) = \pi_{F_{n-1}} (A_{n-1} )$ where $F_{n-1}$ is the 
tile of $\sT_{n-1}$ containing $e$.
\end{enumerate}
Such sequences are constructed in the proof of Theorem~9.7 in \cite{KerTuc23} by a recursive
procedure that involves passing to a tiling subsequence and defining the boxes $A_{n+1}$ to have ever increasing
density of singleton factors that collectively replicate the patterns of points in $A_n$ on large finite windows
(in the limit over $n$ this will create a sufficient amount of recurrence to achieve minimality). Although this density
decreases, we can prevent it from going to zero, so that the actions we next define will be nontrivial.

Put $A = \bigcap_{k=1}^\infty A_k$, which is nonempty and compact since each
$A_k$ is. We then have the right subshift action 
$\Gamma\curvearrowright X:=\overline{\Gamma A} \subseteq \{ 1,\dots , q \}^\Gamma$.
As in the proof of Theorem~9.7 in \cite{KerTuc23}, one can verify using syndeticity and conditions~\ref{I-cd} and \ref{I-F}
that the action $\Gamma\curvearrowright X$ is minimal. 
It is not clear however whether it is topologically free. It is shown in
Section~9 of \cite{KerTuc23} that if the 
entropies of the tilings $\sT_n$ converge to zero (i.e., the tiling sequence $(\sT_n )$
fully witnesses property ID in Definition~9.1 of \cite{KerTuc23}),
then by carefully controlling the sets $D_T (A_k )$ one can arrange for the action to  
have any desired value of topological entropy between $0$ and $\log q$.
Without the entropy hypothesis on the $\sT_n$ one can still control the sets $D_T (A_k )$
so as to guarantee that the topological entropy is nonzero.
The point of this is that minimality and nonzero topological entropy together
imply that the action is topologically free, and even free if $\Gamma$ is torsion-free
(a fact recorded as Lemma~9.6 in \cite{KerTuc23}).
Moroever, if we know $\Gamma\curvearrowright X$ to be topologically free,
then $X$ cannot have isolated points due to minimality and the infiniteness of $\Gamma$,
and so it must be the Cantor set.

What is essential for the proof of Theorem~\ref{T-stable rank one} is that the above subshift actions
satisfy the following specification property with respect to $H$, as is a consequence of (v)-(vii) above.
Property (vii) means that the product of boxes over the tiles intersecting $H$ at a given stage $n$
survive at all subsequent tiling stages and thus give rise to the desired specifiability of configurations over
progressively larger thickenings of $H$ at larger and larger tiling scales. In \cite{KerTuc23} this is used to 
show that, when $q\geq 4$, the rigid stabilizers of open subsets each contain a copy of $\Zb_2 * \Zb_2 * \Zb_2$, 
which implies C$^*$-simplicity. In our case we will use it 
to embed copies of $\fS_d * \Zb_2$ (Lemma~\ref{L-free products}).

\begin{definition}\label{D-specification}
Let $\Gamma\curvearrowright X \subseteq \{ 1,\dots , q \}^\Gamma$ be a right subshift action
and $H=\langle a\rangle$ an infinite cyclic subgroup of $\Gamma$.
We say that the action has a {\it specification ridge along $H$} if for every $E\Subset \Gamma$ 
there exist a $T\Subset\Gamma$ with $E\cup\{ e\} \subseteq T$, a $c\in H$, and sets $A_r \subseteq \{ 1,\dots , q \}$ 
for $r\in R:= \bigcup_{n\in\Zb} Tc^n$ for which
\begin{enumerate}
\item the sets $Tc^n$ for $n\in\Zb$ are pairwise disjoint and cover $H$,

\item $A_{tc^n} = A_t$ for all $t\in T$ and $n\in\Zb$,

\item $A_{a^n} = \{ 1,\dots , q \}$ for every $n\in\Zb$,

\item $X |_R \supseteq \prod_{r\in R} A_r$.
\end{enumerate}
We also simply say that the action has a {\it specification ridge} if it has a specification ridge
along some such subgroup $H$.
\end{definition}

\begin{remark}\label{R-simple}
For subshift actions $\Gamma\curvearrowright X \subseteq \{ 1,\dots , q \}^\Gamma$ with $q\geq 4$
that have a specification ridge, every subgroup of
$\fF (\Gamma ,X)$ containing $\fA (\Gamma ,X)$ is C$^*$-simple. The proof of Theorem~8.7 in \cite{KerTuc23} 
shows in this case that the rigid stabilizer of every nonempty open set contains a copy of the free group
$F_2$ and hence is nonamenable, and this implies C$^*$-simplicity by a result of Le Boudec and Matte Bon \cite{LeBMat18}.
The nonamenability of the rigid stabilizers also follows from Lemma~\ref{L-free products} in the next section.
\end{remark}

\section{Embedding free products}\label{S-free products}

\begin{lemma}\label{L-free products}
Let $q\geq 4$ and let $\Gamma\curvearrowright X \subseteq \{ 1,\dots , q \}^\Gamma$ be a right subshift action
with a specification ridge.
Let $U\subseteq X$ be a nonempty open set and $d\in\Nb$. 
Then there exist a clopen tower $(S,B)$ with $|S|=d$ and $SB\subseteq U$, and an involution $g\in\fA(\Gamma ,X)$ 
supported in $SB$ such that $\fS(S,B)$ is free 
with respect to $g$.
\end{lemma}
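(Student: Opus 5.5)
We will build the tower and the involution by a ping-pong argument in the style of van Douwen, using the full shift that Definition~\ref{D-specification} guarantees along $H=\langle a\rangle$. Here ``$\fS(S,B)$ is free with respect to $g$'' means that the subgroup generated by $\fS(S,B)\cong\fS_d$ and $g\cong\Zb_2$ is their free product $\fS_d*\Zb_2$.

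\emph{Step 1: localizing in $U$.} Shrink $U$ to a cylinder set determined by a pattern $w$ on a window $E\Subset\Gamma$. Apply the specification ridge to a set containing $E$, a generous neighbourhood of $e$, and the points $a^{\pm1},\dots,a^{\pm N}$ for a large $N$. This gives $T$, $c=a^m$ and sets $A_r$. Replacing $T$ by a translate $Tc^k$ with $k$ large, and $w$ by the correspondingly translated pattern, we may assume the window of $w$ lies inside $T$. By (4), any configuration lying in $\prod_{r\in R}A_r$ is realized, with coordinates along $H$ completely free (by (3)). Since $U\cap X\neq\emptyset$, some point of $X|_R$ restricted to $T$ agrees with $w$ and lies in the box. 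Because of the periodicity (2), we may then fix the coordinates on all of $T$ in $\prod_{t\in T} A_t$ except along $H\cap T$, obtaining a nonempty clopen set inside $U$ on which the $H$-coordinates outside a fixed finite part are still unconstrained.

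\emph{Step 2: the tower and $g$.} The ping-pong will use only the coordinates $x(a^n)$. Since $q\ge4$, we have at least four symbols, say $1,2,3,4$.

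For the tower, choose $d$ translates along $H$, $s_i=a^{n_i}$ with the $n_i$ widely spaced. Let the base $B$ be the set of points of the localized set whose $H$-coordinates near $e$ carry a marker word (say a long run of $1$'s) at a distinguished position. Choose the marker so that the translates $s_iB$ are disjoint (the marker sits at different positions) and all lie in $U$. Points of $s_iB$ are mutually distinguished by the marker position, and the tower permutations $h_\sigma$ just shift the marker.

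For the involution, let $g$ swap symbols $3$ and $4$ at a fixed coordinate $a^{p}$ outside the marker region. We realize this as the element of $\fF(\Gamma,X)$ defined piecewise by shifts along $H$. To make this a genuine element of the full group, we use the realizability in the full shift to encode $g$ as a shift by $a^{\pm\ell}$ between two clopen sets $V_3,V_4\subseteq SB$ that differ by a spatial displacement of a pattern. We then arrange $g$ to be a product of two $3$-cycles or transpositions on a clopen tower, so that it lies in $\fA(\Gamma,X)$. Concretely, $g$ will be the product of two disjoint transpositions from $\fS$ of some auxiliary tower, which is even.

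\emph{Step 3: ping-pong.} Define $P$ to be the set of points in $SB$ where the coordinate at position $p$ (read relative to the marker) lies in $\{3,4\}$ and the $d$-marker configuration is ``nontrivially placed'', and $Q$ the complementary region where $g$ acts. Then verify that every nontrivial $\sigma$ maps the domain of $g$'s nontrivial action into a region where $g$ moves points off, and vice versa. The standard way to do this is to exhibit, for each alternating reduced word $\sigma_1 g\sigma_2 g\cdots$, a point that it moves, by reading the word off the freely specifiable $H$-coordinates. Each letter consumes fresh coordinates further along $H$, and the full-shift freedom from condition (3) of Definition~\ref{D-specification} together with (4) lets us choose such a point in $X$.

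\emph{Main obstacle.} The delicate part is making the supports and the ping-pong compatible at once. We need $g$ and the $h_\sigma$ to act on overlapping regions inside $SB$ so that reduced words never collapse, while $g$ is also an involution lying in $\fA(\Gamma,X)$ (not merely in $\fF$) and supported in $SB$. I would first try the van Douwen scheme directly: encode $\fS_d$ as permutations of $d$ marker positions along $H$ and $g$ as flipping the orientation of a separate marker. Freeness then reduces to the corresponding statement for a free-product action on a space of words in the full shift over $H$. In that setting freeness follows from the standard normal-form argument, since $q\ge4$ leaves enough symbols for both the markers and the flips.
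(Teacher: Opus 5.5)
Your architecture matches the paper's: a van Douwen-type construction over the free coordinates along $H$, $g$ a double transposition on an auxiliary tower inside $SB$, and freeness shown by exhibiting a moved point for each reduced word. But the proposal never carries out the part that constitutes the proof, and the one concrete ping-pong mechanism you describe in Step~3 cannot work. Set-based ping-pong on $X$ with sets $P,Q$ needs a nonempty set that every nontrivial $h_\sigma$ maps into a disjoint set. Such a set must avoid the fixed-point set of every nontrivial $h_\sigma$. However, each $h_\sigma$ fixes $X\setminus SB$, and once $d\geq 3$ every level $sB$ is fixed pointwise by some transposition, so no such set exists. Freeness therefore has to be proved word by word. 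That is exactly what you defer to ``the standard normal-form argument'', without ever fixing the tower for $g$ or its position relative to $(S,B)$, which is the point you yourself flag as delicate. Three secondary issues: your first description of $g$ as a symbol swap is not an element of $\fF(\Gamma,X)$; your claim that all $s_iB$ lie in $U$ is unjustified for $s_i=a^{n_i}$, since stamping the translates is compatible with the box only along powers of the period $c$; and the compatibility of $w$ with the box, which you infer from $U\cap X\neq\emptyset$, does not follow from that alone (the paper also uses this tacitly when asserting $B\neq\emptyset$).

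The missing idea is the specific interlocking of the two structures together with the choice of witness.
\begin{itemize}
\item In the paper, $S=\{c^k:0\leq k\leq d-1\}$, and $B$ requires $c^kx$ to be stamped for all such $k$ and carries a marker ($2$ followed by $1$'s at the coordinates $a^{m_1}c^k$).
\item The involution $g$ swaps $D\leftrightarrow c^{9d}D$ and $c^{3d}D\leftrightarrow c^{6d}D$, where $D=SB\cap c^{-3d}SB\cap c^{-6d}SB\cap c^{-9d}SB\cap L$ and $L$ places $1,2,3,4$ at $a^{m_2},a^{m_2}c^{3d},a^{m_2}c^{6d},a^{m_2}c^{9d}$.
\item The four symbols make the four levels of $g$'s tower disjoint (this is where $q\geq 4$ enters). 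Because the displacements are multiples of $3d$, exceeding the height of $(S,B)$, the markers of the two structures never collide.
\item For $w=h_ngh_{n-1}g\cdots gh_1$, one chooses for each $h_i$ a level $l_i$ that it moves \emph{upward} to some $r_i>l_i$. One then prescribes the $a^{m_1}$- and $a^{m_2}$-coordinates of a suitably stamped point $y$ so that its trajectory passes through $c^{l_1}B$, then $D$ (where $g$ acts by the forward shift $c^{9d}$), then $c^{l_2}B$, and so on.
\item Every letter then acts as a positive power of $c$, so consecutive letters cannot cancel and the trajectory keeps entering fresh coordinates. This gives $wy=c^{J_n+9(n-1)d}y$ with $J_n=\sum_i(r_i-l_i)$, and $y(a^{m_2})=4\neq 1=(wy)(a^{m_2})$.
\item Consistency of the prescriptions uses the disjointness of the sets $Tc^n$, the choice $a^{m_i}\notin K$, and the freeness of the $H$-coordinates.
\end{itemize}
Without some explicit construction of this kind, the claim that reduced words never collapse remains unproved. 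In particular you need the upward choice of levels and the placement of the point in the level of $g$'s tower on which $g$ shifts forward.
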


\begin{proof}
The set $U$ is open and thus contains a nonempty subset that is the intersection of a cylinder set
in $\{ 1,\dots , q \}^\Gamma$ with $X$. Having a specification ridge guarantees that all such subsets are infinite.
The cylinder set is determined by a configuration $z\in\{1,\dots,q\}^K$ over some window $K\Subset\Gamma$.

Take an infinite cyclic subgroup $H=\langle a\rangle\subseteq\Gamma$ along which the action has
a specification ridge.
Accordingly find $e\in T\Subset\Gamma$, $c\in h$, and sets $A_r$ fulfilling the conditions in the definition of specification ridge with $E$ taken there to be $K\cup\{a^k:0\leq k\leq|K|+1\}$. 
A configuration $x\in X$ will be called \emph{stamped} if $x|_K=z$,
the importance of this condition being that it implies that $x$ lies in $U$. 
We also fix $0\leq m_1 < m_2\leq|K|+1$ with $a^{m_i}\notin K$ (these will be our control coordinates, 
which we will be freely allowed to choose in $\{1,\dots,q\}$).

Consider now the clopen set $B$ consisting of points $x$ in $X$ such that
\begin{enumerate}
\item $c^kx$ is stamped for $0\leq k\leq d-1$,
\item $x(a^{m_1})=2$,
\item $x(a^{m_1}c^k)=1$ for $1\leq k\leq d-1$.
\end{enumerate}
Then $(S:=\{c^k:k=0,\dots,d-1\},B)$ forms a tower with $|S| =d$ and $SB \subseteq U$,
with $B$ being nonempty by the definition of specification ridge.
To construct a nontrivial involution $g$ that is free with respect to $\fS(S,B)$, we consider the clopen set
\[
D=SB\cap c^{-3d}SB\cap c^{-6d}SB\cap c^{-9d}SB\cap L
\]
where $L$ denotes the set of points $x$ in $X$ with
\begin{align*}
x(a^{m_2})=1,\hspace*{4mm}
x(a^{m_2}c^{3d})=2,\hspace*{4mm}
x(a^{m_2}c^{6d})=3,\hspace*{4mm}
x(a^{m_2}c^{9d})=4.
\end{align*}
This set is nonempty by the definition of specification ridge.
Note moreover that the sets $D$, $c^{3d} D$, $c^{6d} D$, and $c^{9d} D$ are pairwise disjoint by the definition of $L$,
and so we can define a nontrivial involution $g$ by setting, for all $x\in D$,
\begin{align*}
gx=c^{9d}x,\hspace*{4mm}
gc^{9d}x=x,\hspace*{4mm}
gc^{6d}x=c^{3d}x,\hspace*{4mm}
gc^{3d}x=c^{6d}x,
\end{align*}
and letting $g$ be the identity everywhere else. Since $g$ acts on its support by swapping each of two disjoint pairs of clopen 
sets that are all images of each other under elements of $\Gamma$, it belongs to $\fA(\Gamma,X)$.
Note also that the support of $g$ lies in $SB$ by the definition of $D$.

To establish the desired freeness,
suppose that we are given an element $w$ in $\fF (\Gamma ,X)$
of the form $h_ngh_{n-1}g\cdots gh_1$, where each $h_i$ belongs to 
$\fS(S,B)\setminus\{e\}$, and let us show that it is nontrivial. The verification of this nontriviality for nontrivial words of the other
three forms (depending on the membership of the beginning and ending letter) can be carried out similarly.
For every $i\in\{1,\dots,n\}$ there exist $0\leq l_i<r_i\leq d-1$ such that $h_i$ acts as $c^{r_i-l_i}$ on $c^{l_i}B$. 
We set $j_i=r_i-l_i$ and $J_k=\sum_{i=1}^kj_i$. The conditions on the set $A_r$ in the definition
of specification ridge permit us to find a point $y\in X$ satisfying the following:
\begin{enumerate}[resume]
\item $c^k y$ is stamped for $-d\leq k\leq 10nd$,
\item $y(a^{m_1}c^{J_k+9kd-l_{k+1}})=2$ for $0\leq k\leq n-1$,
\item $y(a^{m_1}c^{J_k+9kd-l_{k+1}+s})=1$ for $0\leq k\leq n-1$ and $1\leq s\leq d-1$,
\item $y(a^{m_1}c^{J_k+9(k-1)d+3td})=2$ for $0\leq k\leq n-1$ and $0\leq t\leq 2$,
\item $y(a^{m_1}c^{J_k+9(k-1)d+3td+s})=1$ for $0\leq k\leq n-1$, $0\leq t\leq 2$, and $1\leq s\leq d-1$,
\item $y(a^{m_2}c^{J_k+9(k-1)d+3td})=t+1$ for $0\leq k\leq n$ and $0\leq t\leq3$.
\end{enumerate}
Then
\[
y(a^{m_2})=4\neq 1=(wy)(a^{m_2}),
\]
showing that $w$ is nontrivial.
\end{proof}

\section{Construction of striated towers}\label{S-construction}

Let $\Gamma\curvearrowright X \subseteq \{ 1,\dots , q \}^\Gamma$ 
be a minimal topologically free right subshift action with a specification ridge.

\begin{definition}\label{D-striated}
By a {\it striated clopen tower} we mean a triple $(F,S,C)$ where $F$ and $S$ are finite subsets of $\Gamma$
such that the map $(t,s) \mapsto ts$ from $F\times S$ to $\Gamma$ is injective and 
$C$ is a clopen subset of $X$ such that $(FS,C)$ forms a tower.
\end{definition}

Starting with a finite set $\Omega$ of elements in $\fF(\Gamma,X)$ we will describe a construction of a
striated clopen tower $(F,S,C)$ in $X$ along with an associated permutational Bernoulli structure.
We will refer to this as {\it running the construction for $\Omega$}.
It will provide the framework for our arguments in Section~\ref{S-zero division} and \ref{S-theorem}. 
There is some flexibility in choosing the sets $F$ and $S$ 
and the phase space of the Bernoulli structure, and indeed these will need to be further specified 
when carrying out the proof of Theorem~\ref{T-stable rank one} in Section~\ref{S-theorem}.

By definition, for every $g\in\fF(\Gamma,X)$ there is a continuous function $x\mapsto\theta (g,x)$ on $X$ such that
$gx = \theta (g,x)x$ for all $x\in X$, and this determines a (finite) clopen partition $\sQ_g$ of $X$ whose members
are those nonempty sets of the form $\{ x\in X : \theta (g,x) = s \}$ for $s\in \Gamma$.
Write $K$ for the symmetric subset of $\Gamma$ that is the union of the set 
\[ 
\bigcup_{g\in\Omega} \bigcup_{x\in X} \theta (g,x) \cup \{ e \}
\] 
with its inverse.

Write $\sQ$ for the join of the clopen partitions $\sQ_g$ for $g\in\Omega$. Then there exists a map
\[
P : [\Omega ]^2 \to \sQ
\]
on the collection of two-element subsets of $\Omega$
such that the constant values of the two functions $x\mapsto \theta (g,x)$ and $x\mapsto\theta (g',x)$ on $P_{\{g,g'\}}$
are different. 
Since the action $\Gamma\curvearrowright X$ is minimal and topologically free, 
there is a point $x_0 \in X$ such that the map $s\mapsto sx_0$ on $\Gamma$ is injective and $\Gamma x_0$
is dense in $X$. It follows that for every two-element subset $\omega = \{g, g'\}$ of $\Omega$ there exists an element 
$h_\omega$ in $\Gamma$ with $h_\omega x_0\in P_\omega$ so that 
$\{K^2h_\omega \}_{\omega\in [\Omega]^2}$ forms a disjoint family.
Write $J$ for the (finite) union of this family.

Let $F$ be any finite subset of $\Gamma$ containing $KJ \cup \{ e \}$. 
This containment is what will allow us to carry the proof of Lemma~\ref{L-zero division}.
Let $U$ be any clopen neighbourhood of $x_0$ small enough so that $\{tU\}_{t\in F}$ is a disjoint family of sets
each of which is contained in some member of $\mathscr{Q}$. Given a $d\in\Nb$, we apply Lemma~\ref{L-free products} to obtain
a clopen tower $(S,C)$ with $|S| = d$ and $SC\subseteq U$. As a consequence of our assumption
that the action has a specification ridge, Lemma~\ref{L-free products}
also gives us a nontrivial involution in $\fA (\Gamma ,X)$ that is freely related to $\fS (S,C)$
and has support in $SC$, and this will get used in Section~\ref{S-theorem}.
Note that the value of $\theta (g,tsx_0 )$ 
for $t\in F$ and $s\in S$ is independent of $s$, so that we have a well-defined map
$\zeta :\Omega\times F\to K$ satisfying
\[
gtsx_0=\zeta (g,t)tsx_0
\]
for all $g\in\Omega$, $t\in F$, and $s\in S$. 

Since the family $\{tU\}_{t\in F}$ is disjoint, the subgroups $\fA (tSt^{-1} ,tC) \subseteq \fA (\Gamma ,X)$ for $t\in F$
are pairwise commuting. 
For each $t\in F$ identify $\fA (tSt^{-1} ,tC)$ with $\fA_S$ in the obvious way. 
Correspondingly, for sets $E\subseteq F$ the subgroup of $\fA (\Gamma ,X)$ generated by the subgroups 
$\fA (tSt^{-1} ,tC) \subseteq \fA (\Gamma ,X)$ for $t\in E$ will be identified
with $\fA_S^E$. Under this identification in the case $E=F$, the conjugate of an element $(g_t)_{t\in F} \in\fA_S^F$ 
by an element in $h\in \fS (F,SC)$ is given by $(g_{\sigma (t)})_{t\in F} \in \fA_S^F$ where $\sigma$
is the permutation of $F$ that corresponds to $h$ in the obvious way. Identifying $\fS (F,SC)$ with $\fS_F$, 
the subgroup of $\fF (\Gamma ,X)$ generated by $\fA_S^F$ and $\fS_F$ can thus be regarded
as the permutational wreath product $\fA_S^F \rtimes \fS_F$. 
The C$^*$-subalgebra of $C^*_\lambda (\fF (\Gamma ,X))$ generated by this copy of $\fA_S^F \rtimes \fS_F$
is the finite-dimensional crossed product $\Cb \fA_S^F \rtimes \fS_F$.
This will not in general be contained $C^*_\lambda (\fA (\Gamma ,X))$,
but the permutation action of $\fS_F$ will be of use
even if we are only interested in $\fA (\Gamma ,X)$ or some proper subgroup of $\fF (\Gamma ,X)$
containing it. 

Let $\sP = \{ p_1 , \dots , p_m \}$ be a collection of pairwise orthogonal projections in $\Cb\fA_S$ summing to $1$. 
Then $C^* (\sP ) \rtimes \fS_F$ is a C$^*$-subalgebra of $\Cb \fA_S^F \rtimes \fS_F$
and can be viewed through a spectral lens as the crossed product $C(\{ 1,\dots , m \}^F)\rtimes \fS_F$
of the action of $\fS_F$ on $\{ 1,\dots , m \}^F$ by permutation of the indices.
The tracial state $\tau$ on $C^*_\lambda (\fA (\Gamma ,X))$ induces a product probability measure
$\nu^F$ on $\{ 1,\dots , m\}^F$. 

Writing
\[
F_K = \bigcap_{s\in K} s^{-1} F \subseteq F,
\]
the following principle and notation will be used repeatedly.

\begin{notation}\label{N-action}
For all $g\in\Omega$ and $A\subseteq_{F_K} \{ 1,\dots , m\}^F$ we have
\[
u_g1_Au_g^*=1_{A'},
\]
where membership of a point $y'$ in the set $A' \subseteq \{ 1,\dots , m\}^F$ is determined by the existence
of a $y\in A$ such that $y'(\zeta(g,t)t)=y(t)$ for all $t\in F_K$. In this situation we will denote the set $A'$
by $gA$, even though $g$ does not actually act on $\{ 1,\dots , m \}^F$. 
\end{notation}

Finally we note that there exists a $\tau$-preserving faithful conditional expectation 
\[
\Eb\colon C^*_\lambda(\dF(G,X))\to C(\{ 1,\dots , m\}^F )
\]
(see Proposition~2.36 in Chapter~V of \cite{Tak79}).

\section{Zero division}\label{S-zero division}

Recall that, to approximate a given noninvertible element $a$ in the group C$^*$-algebra with something invertible,
the strategy is to produce a near block diagonal matrix modelling $a$ that can be unitarily rotated to a nilpotent element.
We cannot do this for $a$ itself but must first unitarily rotate $a$ so that it divides into zero on the left and right
by a common nonzero positive element (in fact projection) out of which we can create enough zeros on the matrix diagonal 
to facilitate the desired rotation to nilpotence. In the crossed product setting of \cite{LiNiu20,BelGefKer25} 
this positive element is a function on the space, and here, analogously, it will be an indicator function
in our embedded permutational Bernoulli structure. In \cite{LiNiu20,BelGefKer25} the positive element
is delivered by Lemma~6.1 and Proposition~6.2 of \cite{LiNiu20}, but we cannot make direct use of these results
or their techniques since they do not presuppose that one is working at a fixed dynamical scale. 
In particular we need to avoid the functional calculus methods used in \cite{LiNiu20}, since 
these are fatal to the control we need on the coefficients of approximations in the algebraic crossed product.
It is the discreteness of our phase space that saves us here.
As in \cite{LiNiu20,BelGefKer25}, Lemma~3.5 of \cite{Ror91} will permit us to assume from the outset,
via a perturbation, that the element $a$ to be approximated 
by something invertible is itself already a two-sided zero divisor, which explains this hypothesis in the lemma below. 

\begin{lemma}\label{L-zero division}
Let $\Gamma\curvearrowright X \subseteq \{ 1,\dots , q \}^\Gamma$ be a right subshift action
that is minimal and topologically free.
Let $G$ be a subgroup of $\fF (\Gamma,X)$ containing $\fA (\Gamma , X)$.
Let $a$ and $b$ be elements of $C_\lambda^*(G)$ such that $b$ is positive and nonzero 
and $ba=0=ab$. Let $\eps>0$.
There exists a $\delta>0$ such that, given any
\begin{itemize}
\item $b_1\in\Cb G$ satisfying $\norm{b-b_1}<\delta$ and 
\item symmetric set $\Omega\Subset G$ strictly containing $L^5$, where $L\coloneqq\supp b_1\cup (\supp b_1 )^{-1} \cup \{ e \}$,
\end{itemize}
if one runs the construction in Section~\ref{S-construction} for $\Omega$ 
to produce a striated clopen tower $(F,S,C)$
and permutational Bernoulli structure $\fS_F \curvearrowright (\{ 1,\dots , m\}^F , \nu^F )$
then, using the notation of Section~\ref{S-construction} 
and writing $\lambda$ for the largest sum of the form $\sum_{i\in I} \nu (\{ i \} )$ for $I\subseteq \{ 1,\dots , m \}$
that does not exceed $1/2$,
there exist a set $D\subseteq F_K$ with cardinality depending only on $b_1$ and $|\Omega |$, a self-adjoint unitary 
$u\in\Cb G$ supported in
\[
\Omega\fA_S^D \Omega\fA_S^D \Omega \fA_S^D \Omega ,
\]
and a set $O\subseteq_D \{ 1,\dots , m\}^F$ with $\nu^F (O) \geq \lambda^{|D|}$
such that
\begin{align*}
\norm{1_Oua}<\eps \hspace{3mm} \text{and} \hspace*{3mm} \norm{au1_O}<\eps .
\end{align*}
\end{lemma}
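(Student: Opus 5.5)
We first choose $\delta$ by functional calculus. Since $b\geq 0$ is nonzero and $ba=0=ab$, replace $b$ by $f(b)$ for a continuous $f$ vanishing near $0$ and equal to $1$ on $[\|b\|/2,\|b\|]$. Then $f(b)$ still annihilates $a$ on both sides, and is a nonzero positive contraction with a definite trace $\tau(f(b))>0$. Take $\delta$ small relative to $\eps$, $\|a\|$ and this trace. For any $b_1\in\Cb G$ with $\|b-b_1\|<\delta$ we then have $\|b_1a\|,\|ab_1\|<\delta\|a\|$, and $b_1$ has real trace bounded below. The aim is to transport $b_1$ by a self-adjoint unitary $u$ to an element that nearly dominates an indicator $1_O$ in the Bernoulli structure. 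Writing $1_O\approx 1_O c\, u b_1 u\, c' 1_O$ for suitable bounded $c,c'$ will give
\[
\|1_Oua\|\lesssim \|1_O u b_1\, ua\|+\text{small}=\|1_Ou b_1 a\|+\text{small},
\]
and similarly on the right.

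The key step is a discrete averaging of $b_1$ against the Bernoulli structure at the fixed scale. Write $b_1=\sum_{g\in L}\beta_g u_g$ and apply the conditional expectation $\Eb$ onto $C(\{1,\dots,m\}^F)$. By Notation~\ref{N-action}, conjugating $1_A$ with $A\subseteq_{F_K}\{1,\dots,m\}^F$ by $u_g$ for $g\in L\subseteq \Omega$ moves coordinates via $\zeta(g,\cdot)$. Since $F\supseteq KJ$, the disjoint translates $K^2h_\omega$ give, for each pair $g\neq g'$ in $L^2$ (hence in $\Omega$ because $\Omega\supseteq L^5$), a coordinate $t_\omega\in F_K$ where $g$ and $g'$ move $t_\omega$ to different places. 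Let $D$ collect these coordinates together with their images under $\zeta$; its size is controlled by $|L|$ and $|\Omega|$.

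We choose $O\subseteq_D$ as a product set assigning to each coordinate of $D$ a subset of $\{1,\dots,m\}$ of measure $\lambda$ or $1-\lambda\geq\lambda$. The assignment should make the sets $gO$, $g'O$ disjoint whenever $g\neq g'$ in $L$: at the separating coordinate, one of them is placed in $I$ and the other in $I^c$. Consequently $1_O u_{g^{-1}}u_{g'}1_O=0$ for $g\neq g'$ in the relevant word set. This makes $1_O b_1^* b_1 1_O$, and the compressions $1_{gO}b_11_{g'O}$, essentially diagonal with entries $|\beta_g|^2$. This is the main obstacle: we must arrange the product structure so that all needed disjointness relations hold simultaneously, using only the prescribed single coordinate per pair, while keeping $\nu^F(O)\geq\lambda^{|D|}$.

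Finally we build $u$ as a self-adjoint unitary that swaps $1_O$ with a projection under which $b_1$ has large norm. Concretely, $u$ is a sum of terms $u_g1_{O'}+1_{O'}u_{g}^*$ plus the identity on the complement, and it is self-adjoint because the pieces are orthogonal. This puts its support in $\Omega\fA_S^D\Omega\fA_S^D\Omega\fA_S^D\Omega$, since the indicators $1_{O'}$ lie in $\Cb\fA_S^D$ and the translations come from $\Omega$. The estimates then follow from $\|ab_1\|,\|b_1a\|$ small combined with the near-diagonal compression.
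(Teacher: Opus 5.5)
Your choice of $O$ is essentially the paper's set $Y_0$. It is a product cylinder set with $I$ versus $I^c$ constraints at the separating coordinates, so its $\Omega$-translates are pairwise disjoint and $\nu^F(O)\geq\lambda^{|D|}$. This is not really the main obstacle, since the disjointness of the family $\{K^2h_\omega\}$ makes the constraints automatically compatible. Your observation that it gives the exact identity $1_Ob_1^*b_11_O=(\sum_{g\in L}|\beta_g|^2)1_O$ is correct.

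The gap is in the last step, which is the only place where $ab=ba=0$ can enter. A self-adjoint unitary of the form you describe is assembled from the $u_g$ and indicators $1_{O'}$ with orthogonal pieces, so it merely permutes cylinder sets. Hence $u1_Ou$ is again an indicator function in $C(\{1,\dots,m\}^F)$, and the required estimates would say $\|1_{O''}a\|,\|a1_{O''}\|<\eps$ for some nonempty cylinder set $O''$. Nothing forces this, and it is false in general. Take an involution $k\in G$ whose support is not contained in $FSC$ (so $k\notin\fA_S^F$), and set $b=(1+u_k)/2$ and $a=1-b$. For any nonzero projection $e\in C^*_\lambda(\fA_S^F)$ one has $\tau(eu_ke)=\tau(u_ke)=0$, so $\tau(e(1-u_k)e)=\tau(e)$ and hence $\|e(1-u_k)e\|\geq 1$. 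Therefore $\|ea\|^2=\|eae\|=\tfrac12\|e(1-u_k)e\|\geq\tfrac12$. Relatedly, your heuristic $1_O\approx 1_Oc\,ub_1u\,c'1_O$ puts $b_1$ in the middle, where it never touches $a$, and the equality $\|1_Oub_1ua\|=\|1_Oub_1a\|$ you wrote is false.

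What is needed is a unitary with $u1_{O_0}u\in b_1C^*_\lambda(G)b_1^*$ for the final set $O_0$. Then $\|1_{O_0}ua\|=\|(u1_{O_0}u)a\|\lesssim\|b_1^*a\|$, and symmetrically on the right. The paper gets this by putting $b_1$ inside the partial isometry:
\begin{itemize}
\item It sets $b_2=b_11_{Y_0}b_1^*/\|\Eb((b_11_{Y_0}b_1^*)^2)\|^{1/2}$.
\item It takes $O=g_0Y_0$ with $g_0$ maximizing the coefficient, so that $1_Ob_2^21_O=1_O$; this is where $L^5\subseteq\Omega$ is used.
\item It puts $v=u_{hg_0^{-1}}1_Ob_2$ with $h\in\Omega\setminus L^5$. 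Then $vv^*=1_{hY_0}$ and $v^*v=b_21_Ob_2$ are orthogonal.
\item With $u=1+v+v^*-vv^*-v^*v$, $u$ conjugates $1_{hY_0}$ to $b_21_Ob_2$, so $\|1_{hY_0}ua\|\leq\|b_2\|\,\|b_2a\|$. This is small because $\|b_1^*a\|<\delta\|a\|$ and the normalization is controlled by $\beta_e=\tau(b_1)\geq 1-\delta$.
\end{itemize}
Your exact identity would let you do this more directly, with $v=(\sum_g|\beta_g|^2)^{-1/2}u_h1_Ob_1^*$ and $\sum_g|\beta_g|^2\geq|\tau(b_1)|^2$.

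Either way, the extra element $h$ is essential; it is exactly the hypothesis that $\Omega$ strictly contains $L^5$, which your proposal never uses. The formula $1+v+v^*-vv^*-v^*v$ is a unitary only when $vv^*\perp v^*v$. Without translating $1_O$ by $h$, the $\beta_e$-term makes $1_O$ and $b_11_Ob_1^*$ overlap.

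Finally, replacing $b$ by $f(b)$ is incompatible with the hypothesis, since $b_1$ approximates $b$ and not $f(b)$. Simply normalize $\tau(b)=1$ instead.
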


\begin{proof}
We may assume by normalizing that $\norm{a}=1$ and $\tau (b) = 1$. 
Take a $0 < \delta < 1$ small enough so that $(1-\delta )^{-4}(\norm{b}+\delta)^3\delta < \eps$.
Let $b_1$ and $\Omega$ be as in the lemma statement relative to $\delta$. 
We can write $b_1 = \sum_{g\in L} \beta_g u_g$ for some coefficients $\beta_g$.
We run the construction from Section~\ref{S-construction} for $\Omega$ to obtain 
a striated clopen tower $(F,S,C)$ and permutational Bernoulli structure $\fS_F \curvearrowright (\{ 1,\dots , m\}^F , \nu^F )$,
and we adopt the notation $J$,  $K$, $F_K$, $x_0$, $\zeta$, $h_\omega$, and $\Eb$ from there. 

We construct a subset $Y_0$ of $\{ 1,\dots , m\}^F$ as follows. 
Take an $I\subseteq \{ 1,\dots , m \}$ that gives $\sum_{i\in I} \nu (\{ i \} )$ the largest value not exceeding $1/2$,
and denote this value by $\lambda$.
By the assumption on $\Omega$, there exists an element $h\in \Omega\setminus L^5$.
Since the family $\{K^2h_\omega \}_{\omega\in [\Omega]^2}$ is disjoint 
by the construction in Section~\ref{S-construction},
for every two-element set $\omega= \{ g,g'\} \subseteq \Omega$ 
we have $gh_\omega x_0\neq g'h_\omega x_0$.
Consider the disjoint union
\[
D_0 := \{\zeta (g,h_\omega )h_\omega \}_{\omega = \{ g,g'\}\in [\Omega ]^2}\sqcup\{\zeta (g',h_\omega)h_\omega\}_{\omega = \{ g,g'\} \in [\Omega ]^2} ,
\]
whose cardinality only depends on $b_1$ and $|\Omega|$,
and define $Y_0$ as the set consisting of all points in $\{ 1,\dots , m\}^F$ which take value in $I$ 
at the coordinates in the first of the sets in this union and value in the complement of $I$ at the coordinates in the second one.
Since $KD_0 \subseteq J$ and (by assumption in the construction of Section~\ref{S-construction}) $J \subseteq F$,
by Notation~\ref{N-action} we have projections $1_{gY_0}$ for $g\in\Omega$, and these 
are pairwise orthogonal by our definition of $Y_0$. Note additionally that $\nu^F (Y_0 ) \geq \lambda^{|D_0|}$.
We calculate:
\begin{align}\label{E-cutdown}
b_1 1_{Y_0} b_1^*&=\sum_{g,g' \in L} \beta_g u_g 1_{Y_0} u_{g'}^*\overbar{\beta_{g'}}\\
&=\sum_{g,g'\in L} \beta_g \overbar{\beta_{g'}} 1_{gY_0} u_{g({g'}^{-1})} \notag \\
&=\sum_{s\in L^2}\bigg(\sum_{g\in sL\cap L}\beta_{g}\overbar{\beta_{s^{-1}g}}1_{gY_0}\bigg)u_s . \notag
\end{align}
Set
\[
b_2=\frac{1}{\norm{\Eb((b_11_{Y_0}b_1^*)^2)}^{1/2}}b_1 1_{Y_0}b_1^* ,
\]
which is positive.
Since $\beta_e= \tau (b_1 ) \geq \tau (b) - \delta = 1-\delta$, we have
\begin{equation}\label{expect-bound}
\norm{\Eb((b_11_{Y_0}b_1^*)^2)}=\bigg\| \sum_{g\in L}\bigg(\sum_{s\in gL}|\beta_g|^2|\beta_{s^{-1}g}|^2\bigg)1_{gY_0} \bigg\|
\geq (1-\delta )^4 .
\end{equation}

We pick a $g_0$ that maximizes the quantity
\[
\sum_{s\in gL}|\beta_g|^2|\beta_{s^{-1}g}|^2
\]
over all $g\in L$ and set $O = g_0Y_0$. 
Since $\| \Eb(b_2^2) \|=1$ and the projections $1_{gY_0}$ for $g\in L$ 
are pairwise orthogonal, we then have $\Eb(b_2^2)1_{O} = 1_{O}$.
Define 
\[
D = D_0 \cup \{\zeta(g_0^{-1},t)t:t\in D_0\},
\]
the cardinality of which depends only on $b_1$ and $|\Omega |$.
Then $KD\subseteq K J \subseteq F$ so that $D\subseteq F_K$. We also have
$O\subseteq_D \{ 1,\dots , m\}^F$
and $\nu^F (O) = \nu^F (Y_0 ) = \lambda^{|D_0 |} \geq \lambda^{|D|} $.

Taking the representation \eqref{E-cutdown} of $b_1 1_{Y_0} b_1^*$ as a linear combination of the unitaries associated to $L^2$
with coefficients in $C(\{ 1,\dots , m\}^F )$, squaring it, and normalizing, we obtain an expression for $b_2$
of the form $\sum_{g\in L^4}f_gu_g$ where the $f_g$ are functions in $C(\{ 1,\dots , m\}^F )$.
Using the pairwise orthogonality of the projections $1_{gO} = 1_{gg_0 Y_0}$ for $g\in L^4$, we then
compute that
\[
1_{O}b_2^21_{O}=\sum_{g\in L^4}f_g1_{O}1_{gO}u_g=f_e 1_O = \Eb(b_2^2)1_{O}=1_{O}.
\]

Define $v=u_{hg_0^{-1}}1_{O}b_2$. Then
\[
vv^*= u_h 1_{Y_0} u_h^* = 1_{hY_0}
\]
and
\[
v^*v=b_21_{O}b_2 .
\]
Thus $v$ is a partial isometry, and $v^*v\perp vv^*$ since $hY_0$ is disjoint from the support of all
of the functions $f_g$ appearing in the decomposition of $b_2$. Define
\[
u=1+v+v^*-v^*v-vv^* ,
\]
which is a self-adjoint unitary that conjugates $v^*v$ to $vv^*$. Observe that the support of $u$ is contained in
the union of the sets
\begin{gather*}
h \fA_S^{D} g_0^{-1}L\fA_S^{D} L ,\hspace*{4mm}
L\fA_S^{D} L g_0 \fA_S^{D}  h^{-1} , \\
h\fA_S^{D} h^{-1} , \hspace*{4mm}
L\fA_S^{D} Lg_0 \fA_S^{D} g_0^{-1} L\fA_S^{D} L ,
\end{gather*}
and this union is in turn contained in
\begin{gather*}
\Omega\fA_S^D \Omega\fA_S^D \Omega \fA_S^D \Omega ,
\end{gather*}
which is a subset of $G$ since $\fA_S^D \subseteq \fA (\Gamma ,X) \subseteq G$.
Finally, we observe that
\begin{align*}
\norm{1_{O}ua}
= \| u(u^* 1_O u)a \|
&=\norm{ub_21_{O}b_2a} \\
&\overset{\mathclap{\eqref{expect-bound}}}{\leq} 
(1-\delta )^{-4} \norm{b_1}^3\norm{b_1a} \\
&\leq (1-\delta )^{-4} (\norm{b}+\delta)^3 \delta
< \eps
\end{align*}
and similarly $\norm{au1_{O}} = \norm{ab_21_{O}b_2u}\leq (1-\delta )^{-4}\delta (\norm{b}+\delta)^3 < \eps$, as desired.
\end{proof}

\begin{remark}
Choosing any approximation $a_0$ of the element $a$ above, we get that $1_{O^c}ua_0u1_{O^c}$ approximates $uau$ and is annihilated by $1_O$ on both sides. 
In particular, if a finitely supported $a_0$ is picked at the outset, then we can determine the support of this approximation to $uau$
by including the support of $a_0$ in $\Omega$, as will be done in our 
application in Section~\ref{S-theorem}.
\end{remark}

\section{Producing approximately invariant sets}

The following is a simple refinement of a particular instance of Lemma~5.3 from \cite{KerTuc23}, which has its
origins in Section~4 of \cite{KecTsa08}.

\begin{lemma}\label{L-two sets}
Let $\nu$ be the uniform probability measure on $\{ -1,1 \}$.  
Let $\eps > 0$ and $M\in\Nb$. 
Then there is a $\delta > 0$ such that for every nonempty finite set $F$ of sufficiently large cardinality, 
every $E\subseteq F$ with $|E| \geq (1-\delta )|F|$, and every $\Omega\subseteq\fS_F$ of cardinality $M$,
setting for $k=-1,1$
\[ 
B_k = \big\{ x\in \{ -1,1 \}^F : k\cdot\textstyle\sum_{s\in E} x_s > 0 \big\}
\]
the sets $A_k = \bigcap_{\omega\in\Omega} \omega B_k$ for $k=-1,1$ satisfy the following:
\begin{enumerate}[label=(\roman*)]
\item $\nu^F (A_{-1} ) = \nu^F (A_1 ) \geq \frac12 - \eps$,

\item $\nu^F (\sigma A_k \Delta A_k ) < \eps$ for all $\sigma\in\fS_F$ and $k=-1,1$.
\end{enumerate}
\end{lemma}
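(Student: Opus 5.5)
Write $S_E(x)=\sum_{s\in E}x_s$ and $n=|F|$. The plan is to reduce everything to the central limit theorem for sums of i.i.d.\ Rademacher variables, using one basic observation: for $\omega\in\fS_F$ we have $\omega B_k=\{x: k\cdot S_{\omega(E)}(x)>0\}$, where $\omega(E)$ is again a subset of $F$ of size at least $(1-\delta)n$. The set $E$ and each $\omega(E)$ miss at most $\delta n$ points of $F$. Hence $S_{\omega(E)}-S_E$ is a signed sum of at most $2\delta n$ independent $\pm1$ variables, and the same holds for $S_{\sigma(\omega(E))}-S_E$ with $\sigma$ arbitrary.

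First I will prove (i). The equality $\nu^F(A_{-1})=\nu^F(A_1)$ follows from the symmetry $x\mapsto -x$: it preserves $\nu^F$, commutes with the permutation action, and exchanges $B_{-1}$ and $B_1$, hence exchanges $A_{-1}$ and $A_1$. For the lower bound, note that $B_1\setminus\omega B_1\subseteq\{x: 0<S_E(x)\le |S_E(x)-S_{\omega(E)}(x)|\}$. For a threshold $t=\eta\sqrt n$ I will bound this by
\[
\nu^F(|S_E|\le t)+\nu^F(|S_E-S_{\omega(E)}|>t).
\]
The first term is at most $C\eta+o(1)$, by the central limit theorem applied to $S_E$ (or by an anticoncentration bound for binomials). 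The second term is at most $2\delta n/t^2=2\delta/\eta^2$, by Chebyshev, since the variance of $S_E-S_{\omega(E)}$ is at most $2\delta n$. I will choose $\eta$ with $MC\eta<\eps/4$ and then $\delta$ with $2M\delta/\eta^2<\eps/4$, and take $n$ large enough to absorb the $o(1)$ term. A union bound over the $M$ elements of $\Omega$ then gives $\nu^F(B_1\setminus A_1)<\eps/2$. I also need $\nu^F(B_1)\ge \frac12-\eps/2$; this holds because $\nu^F(B_1)=\frac12(1-\nu^F(S_E=0))$ and $\nu^F(S_E=0)=O(|E|^{-1/2})$.

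For (ii), \emph{the key point is that $\sigma$ is an arbitrary permutation, not one of the boundedly many elements of $\Omega$}, so I cannot union-bound over $\sigma$. Fortunately none is needed. We have $\sigma A_k=\bigcap_{\omega\in\Omega}\sigma\omega B_k$, and each set $\sigma\omega B_k$ is of the same form as $B_k$, determined by a subset $\sigma(\omega(E))$ of size at least $(1-\delta)n$. So the estimate from (i) applies verbatim with $E$ replaced by the set $\sigma(\omega(E))$. This gives $\nu^F(B_k\setminus\sigma A_k)<\eps/2$ for every $\sigma$, including $\sigma=\mathrm{id}$. Since both $A_k$ and $\sigma A_k$ are contained in $B_k$ up to a set of measure less than $\eps/2$, we get
\[
\nu^F(\sigma A_k\Delta A_k)\le\nu^F(B_k\setminus A_k)+\nu^F(B_k\setminus\sigma A_k)<\eps.
\]
Strictly speaking, $\sigma A_k$ need not be contained in $B_k$, so I will carry out this comparison with a common reference set $B_k$ and bound both one-sided differences. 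I will get them from the bound
\[
\nu^F(\omega B_k\,\Delta\, B_k)\le\nu^F(|S_E|\le t)+\nu^F(|S_E-S_{\omega(E)}|>t),
\]
applied to every set $\omega$ that appears.

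The only delicate point is making the CLT/anticoncentration step uniform in $E$ with $|E|\ge(1-\delta)n$. I will handle this with the Berry--Esseen theorem, or the elementary bound $\max_j\binom{N}{j}2^{-N}=O(N^{-1/2})$, which gives $\nu^F(|S_E|\le\eta\sqrt n)\le C(\eta+n^{-1/2})$ with $C$ absolute. This is why $|F|$ is required to be sufficiently large in the statement.
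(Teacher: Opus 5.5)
Your proof is correct, but it is organized differently from the paper's.

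\textbf{The paper's route.} The paper first reduces to $\Omega=\{\id\}$ by applying the single-set case to each $\omega E$ with $\eps/M$ and then using $\bigcap_\omega X_\omega\,\Delta\,\bigcap_\omega Y_\omega\subseteq\bigcup_\omega(X_\omega\Delta Y_\omega)$. It then compares $B_k$ and $\sigma B_k$ directly. It splits $E\cup\sigma E$ into $E\cap\sigma E$, $E\setminus\sigma E$ and $\sigma E\setminus E$, and writes $\nu^F(\sigma B_k\Delta B_k)$ as $2\Pb(W\le -U\le V)$. This quantity is made small by quoting Lemma~5.2 of \cite{KerTuc23}, and the central limit theorem gives (i).

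\textbf{Your route.} You compare everything to the single reference set $B_k$. You bound each $\nu^F(\omega' B_k\Delta B_k)$, for $\omega'\in\Omega\cup\sigma\Omega$, by
$\nu^F(|S_E|\le\eta\sqrt n)+\nu^F(|S_E-S_{\omega'(E)}|>\eta\sqrt n)$.
You control the first term by the binomial anticoncentration bound and the second by Chebyshev, using $|E\,\Delta\,\omega'(E)|\le 2\delta n$, which holds for every permutation $\omega'$.

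\textbf{What each approach buys.} Your argument is self-contained, handles general $\Omega$ without a separate reduction, and makes the dependence of $\delta$ on $\eps$ and $M$ explicit. The cost is small: an $n^{-1/2}$ term also appears in (ii), so you use the largeness of $|F|$ there too, which the statement allows. You also lose a factor of $2$ by passing through $B_k$.

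The uniformity in $\sigma$ that you single out is exactly what your Chebyshev bound delivers, since it depends only on $|E\,\Delta\,\sigma\omega(E)|$. The paper gets the same uniformity because the hypothesis of its cited lemma involves only cardinalities.

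Incidentally, your direct variance count avoids a small slip in the paper. There, $|E\cap\sigma E|$ is only guaranteed to be at least $(1-2\delta)|F|$, not $(1-\delta)|F|$ as written, though halving $\delta$ fixes this.
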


\begin{proof}
It is enough to prove the statement for $\Omega = \{ \id \}$, for then we could 
apply it for each $\omega\in\Omega$ taking $E$ to be $\omega E$ (in which case $A_k$ is simply
$\omega B_k$) and $\eps$ to be $\eps /M$ in order to obtain, for all $\sigma\in\fS_F$ and $k=-1,1$,
\begin{align*}
\nu^F (\sigma A_k \Delta A_k ) 
\leq \sum_{\omega\in\Omega} \nu^F (\sigma \omega B_k \Delta \omega B_k ) 
< |\Omega| \cdot \frac{\eps}{M} = \eps 
\end{align*}
and, picking an $\omega_0\in\Omega$,
\begin{align*}
\nu^F (A_k ) 
&\geq \nu^F (\omega_0B_k ) - \sum_{\omega\in\Omega\setminus \{ \omega_0 \}} \nu^F (\omega B_k \Delta \omega_0 B_k ) \\
&\geq \Big( \frac12 - \frac{\eps}{M} \Big) - (|\Omega |-1)\cdot\frac{\eps}{M} \\
&= \frac12 - \eps .
\end{align*}
The fact that $\nu^F (A_{-1} ) = \nu^F (A_1 )$ is clear from the definitions of $B_{-1}$ and $B_1$.

We assume then that $\Omega = \{ \id \}$. For $s\in F$ write $\pi_s$ for the coordinate projection map
$x = (x_t )_{t\in F} \mapsto x_s$ from $\{ -1,1\}^F$ to $\{ -1,1 \}$.
Proceeding as in the proof of Lemma~5.3 in \cite{KerTuc23}, we use Lemma~5.2
of \cite{KerTuc23} to find a $\delta > 0$ such that for every partition $F = F_1\sqcup F_2\sqcup F_3$ of a finite set
with $|F_1| \geq (1-\delta )|F|$ and $|F_2|=|F_3|$ the random variables 
\begin{align}\label{E-rvs}
U = \sum_{s\in F_1} \pi_s, \hspace*{5mm}
V = \sum_{s\in F_2} \pi_s, \hspace*{5mm}
W = \sum_{s\in F_3} \pi_s
\end{align}
on $(\{ -1,1\}^F , \nu^F )$ satisfy $\Pb (W\leq -U \leq V) < \eps$.
Now let $E$ be a subset of a finite set $F$ with $|E|\geq (1-\delta )|F|$, and for $k=-1,1$ set
$A_k = \big\{ x\in \{ -1,1 \}^F : k\cdot\textstyle\sum_{s\in E} x_s > 0 \big\}$. Let $\sigma\in\fS_F$.
Define $U$, $V$, and $W$ as in \eqref{E-rvs} with $F_1 = \sigma E\cap E$, $F_2 = E\setminus\sigma E$, and 
$F_3 = \sigma E\setminus E$. Then $|F_2 | = |F_3 |$ and 
\begin{align*}
|F_1 | = |F| - |F\setminus \sigma E| - |F\setminus E| = |F| - 2|F\setminus E| \geq (1-\delta )|F| ,
\end{align*}
which by our choice of $\delta$ implies, for $k=-1,1$, that
\begin{align*}
\nu^F (\sigma A_k\Delta A_k) 
&= \nu^F (A_k\setminus \sigma A_k )+ \nu^F (\sigma A_k\setminus A_k) \\
&= 2\Pb (W\leq -U \leq V) 
< \eps ,
\end{align*}
yielding (ii). Moreover, for $k=-1,1$ we have
\begin{align*}
\nu^F (A_k )
= \Pb \bigg( \frac{k\cdot\textstyle\sum_{s\in E} \pi_s}{\sqrt{|F|}} > 0\bigg) .
\end{align*} 
The random variables $\pi_s$ for $s\in F$ have common variance $1$,
and so by the central limit theorem
we can make the above probability as close as we wish to $1/2$  
by taking $|F|$ sufficiently large, yielding (i).
\end{proof}

In the following lemma and proof we tacitly identify $(\{ -1,1 \}^d )^F$
with $(\{ -1,1 \}^F )^d$ in the obvious way. Under this identification the measure $(\nu^d )^F$ 
gets expressed as $(\nu^F )^d$.

\begin{lemma}\label{L-product sets}
Let $\nu$ be the uniform probability measure on $\{ -1,1 \}$.   
Let $d\in\Nb$, $\eps > 0$, and $M\in\Nb$. Then, writing $\zeta = \nu^d$,
there is a $\delta > 0$ such that for every nonempty finite set $F$, every $E\subseteq F$ 
satisfying $|E| \geq (1-\delta )|F|$, and every $\Omega\subseteq\fS_F$ of cardinality $M$, 
for $k=-1,1$ setting
\[
B_k = \big\{ x\in \{ -1,1 \}^F : k\cdot\textstyle\sum_{s\in E} x_s > 0 \big\}
\]
and $A_k = \bigcap_{\omega\in\Omega} \omega^{-1} B_k$
the sets $A_\kappa = A_{\kappa (1)} \times\cdots\times A_{\kappa (d)}$
for $\kappa \in \{-1,1 \}^d$
satisfy the following:
\begin{enumerate}[label=(\roman*)]
\item the measures $\zeta^F (A_\kappa )$ for $\kappa \in \{-1,1 \}^d$ 
are equal to some common value greater than $2^{-d} - \eps$,

\item $\zeta^F (\sigma A_\kappa \Delta A_\kappa ) < \eps$ 
for all $\sigma\in\fS_F$ and $\kappa \in \{-1,1 \}^d$,

\item the sets $\bigcup_{\omega\in\Omega} \omega A_\kappa$ for 
$\kappa \in \{-1,1 \}^d$ are pairwise disjoint.
\end{enumerate}
\end{lemma}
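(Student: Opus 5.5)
The plan is to reduce everything to Lemma~\ref{L-two sets}, applied once on a single factor $\{-1,1\}^F$, and then pass to the $d$-fold product using the identification of $(\{-1,1\}^d)^F$ with $(\{-1,1\}^F)^d$, under which $\zeta^F=(\nu^F)^d$.

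\emph{Choice of parameters.} Fix $\eta>0$ small enough that $(\tfrac12-\eta)^d>2^{-d}-\eps$ and $d\eta<\eps$. Let $\delta$ be the constant that Lemma~\ref{L-two sets} gives for $\eta$ and $M$. We then apply that lemma to the set $\Omega^{-1}=\{\omega^{-1}:\omega\in\Omega\}\subseteq\fS_F$, which again has cardinality $M$. Its sets $\bigcap_{\omega\in\Omega}\omega^{-1}B_k$ are exactly the sets $A_k$ of the present statement. This yields $\nu^F(A_1)=\nu^F(A_{-1})\geq\tfrac12-\eta$ and $\nu^F(\sigma A_k\Delta A_k)<\eta$ for all $\sigma\in\fS_F$ and $k=\pm1$.

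One caveat concerns the hypothesis on $F$. Lemma~\ref{L-two sets} needs $|F|$ sufficiently large, and some such hypothesis is genuinely needed here. For instance, with $|F|=2$ and $E=F$ we get $\nu^F(B_1)=1/4$, so (i) can fail. I would therefore read ``every nonempty finite set $F$'' in the statement as ``every finite $F$ of sufficiently large cardinality'', exactly as in Lemma~\ref{L-two sets}. I would make that hypothesis explicit, or equivalently shrink $\delta$ below $1/|F|$ for the finitely many small $F$, which forces $E=F$ there and still leaves the small cases to be excluded.

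\emph{Proof of (i).} The set $A_\kappa$ is a product set. Hence
\[
\zeta^F(A_\kappa)=\prod_{i=1}^d\nu^F(A_{\kappa(i)})=\nu^F(A_1)^d ,
\]
using $\nu^F(A_1)=\nu^F(A_{-1})$. This common value is independent of $\kappa$ and is at least $(\tfrac12-\eta)^d>2^{-d}-\eps$.

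\emph{Proof of (ii).} A permutation $\sigma\in\fS_F$ acts diagonally on the $d$ factors, so $\sigma A_\kappa=\prod_i\sigma A_{\kappa(i)}$. A point of $\sigma A_\kappa\Delta A_\kappa$ must lie in $\sigma A_{\kappa(i)}\Delta A_{\kappa(i)}$ in at least one coordinate $i$. A union bound then gives
\[
\zeta^F(\sigma A_\kappa\Delta A_\kappa)\leq\sum_{i=1}^d\nu^F(\sigma A_{\kappa(i)}\Delta A_{\kappa(i)})<d\eta<\eps .
\]

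\emph{Proof of (iii).} This is the point of using $\omega^{-1}$ in the definition of $A_k$. For $\omega\in\Omega$ we have $\omega A_k\subseteq\omega\omega^{-1}B_k=B_k$, so $\omega A_\kappa\subseteq\prod_i B_{\kappa(i)}$. Since $B_1\cap B_{-1}=\emptyset$, the product sets $\prod_iB_{\kappa(i)}$ for distinct $\kappa$ are pairwise disjoint: they differ in some coordinate $i$. Therefore $\bigcup_{\omega\in\Omega}\omega A_\kappa\subseteq\prod_iB_{\kappa(i)}$ are pairwise disjoint as well.

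Apart from the caveat about small $F$, none of these steps is a real obstacle, since all the probabilistic content is already in Lemma~\ref{L-two sets}.
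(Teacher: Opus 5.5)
Your proposal is correct and follows the paper's proof essentially step for step: you apply Lemma~\ref{L-two sets} (to $\Omega^{-1}$) with tolerance of order $\eps/d$, then get (i) from the product structure $\zeta^F=(\nu^F)^d$, (ii) from a union bound over the $d$ coordinates, and (iii) from $\omega A_k\subseteq B_k$ together with $B_1\cap B_{-1}=\emptyset$. Your caveat is also well founded: the paper's statement and proof silently drop the ``sufficiently large $|F|$'' hypothesis of Lemma~\ref{L-two sets}, your $|F|=2$ example shows it is needed, and omitting it is harmless in Section~\ref{S-theorem} because $F$ is taken large there; in addition, your choice of $\eta$ secures the strict inequality in (i) even for $d=1$, where the paper's bound $(\frac12-\eps/d)^d$ only gives $\geq$.
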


\begin{proof}
By Lemma~\ref{L-two sets} we can find a $\delta > 0$ such that, given a nonempty finite set $F$ and an $E\subseteq F$ 
satisfying $|E| \geq (1-\delta )|F|$, for $k=-1,1$ the sets $A_k = \bigcap_{\omega\in\Omega} \omega^{-1} B_k$ 
where
\[
B_k = \big\{ x\in \{ -1,1 \}^F : k\cdot\textstyle\sum_{s\in E} x_s > 0 \big\}
\]
satisfy
\begin{itemize}
\item $\nu^F (A_1 ) = \nu^F (A_2 ) \geq 2^{-1} - \eps /d$, and

\item $\nu^F (\sigma A_k \Delta A_k ) < \eps /d$ for all $\sigma\in\fS_F$ and $k=-1,1$.
\end{itemize}
As in the lemma statement define the sets $A_\kappa$ 
for $\kappa \in \{-1,1 \}^d$ and view them simultaneously as subsets of $(\{ -1,1 \}^d )^F$. 
These sets have the same 
$\zeta^F$-measure since the sets $A_{-1}$ and $A_1$ have the same $\nu^F$-measure and $\zeta^F = (\nu^d )^F$
is the same as the measure $(\nu^F )^d$ under the identification of 
$(\{ -1,1 \}^d )^F$ with $(\{ -1,1 \}^F )^d$,
and this common value is at least $(2^{-1} - \eps /d)^d$, which, assuming $\eps < 1$ as we may, 
is greater than $2^{-d} - \eps$. Thus condition (i) is fulfilled.

For condition (ii), observe that for each $\kappa\in \{-1,1 \}^d$ and $\sigma\in\fS_d$
the set $\sigma A_\kappa \Delta A_\kappa$ is contained in the union of the $d$ sets of the form 
$C_1 \times\cdots\times C_d\subseteq (\{ -1,1\}^F )^d = (\{ -1,1 \}^d )^F$ 
where $C_j = \sigma A_{\kappa (j)} \Delta A_{\kappa (j)}$ for some $1\leq j \leq d$ 
and $C_i = \{ -1,1\}^F$ for $i\neq j$, so that 
\[
\zeta^F (\sigma A_\kappa \Delta A_\kappa ) 
\leq \sum_{i=1}^d \nu^F (\sigma A_{\kappa (i)} \Delta A_{\kappa (i)} )
< d\cdot \frac{\eps}{d} = \eps .
\]

Finally, to verify condition (iii) let $\kappa = (i_1 , \dots , i_d )$ and $\kappa' = (i_1' , \dots , i_d' )$ be distinct elements
of $\{ -1,1 \}^d$. Then there is a $1\leq j\leq d$ such that $i_j \neq i_j'$, in which case
$B_{i_j}$ and $B_{i_j'}$ are disjoint. 
Since for all $\omega\in\Omega$ we have $\omega A_k \subseteq B_k$ for $k=-1,1$, this implies that the sets
$\bigcup_{\omega\in\Omega} \omega A_\kappa 
= \bigcup_{\omega\in\Omega} \omega A_{\kappa (1)} \times\cdots\times \omega A_{\kappa (d)}$ and 
$\bigcup_{\omega\in\Omega} \omega A_{\kappa'}
= \bigcup_{\omega\in\Omega} \omega A_{\kappa' (1)} \times\cdots\times \omega A_{\kappa' (d)}$ 
are disjoint.
\end{proof}

\section{A combinatorial lemma}\label{S-combinatorial}

\begin{lemma}\label{L-combinatorial}
Let $0 < \gamma \leq 1$ and $c\geq 1$. Let $n$ be an integer greater than $(2c+6)/\gamma$.
Let $\sV$ be a disjoint collection of $n^2$ many finite sets with common cardinality greater than $n$. 
Let $O$ be a subset of $X:=\bigsqcup \sV$ of cardinality 
at least $\gamma |X|$. Then, writing $\mu$ for the uniform probability measure on $X$, 
there are a partition $O = O_1 \sqcup O_2 \sqcup O_3$ and
an indexing $\{ V_{i,j} \}_{i,j=1}^n$ of the members of $\sV$ such that, setting 
$V = \bigsqcup_{i=2}^n \bigsqcup_{j=1}^n V_{i,j}$, $V_1 = \bigsqcup_{i=2}^n V_{i,1}$, and $P = \bigsqcup_{j=1}^n V_{1,j}$, 
\begin{enumerate}[label=(\roman*)]
\item $\mu (O_1 \cap \bigsqcup_{j=2}^n V_{i,j} ) \geq \mu (V_{i,1} ) + c/n^2$ for every $i=2,\dots , n$,

\item $\mu ((O_2 \cap V)\setminus V_1 ) \geq \mu (P) + c/n^2$, 

\item $\mu (O_3 \cap P) \geq c/n^2$.
\end{enumerate}
\end{lemma}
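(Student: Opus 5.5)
The plan is to prove the lemma by a direct counting argument: choose the indexing by sorting the members of $\sV$ according to how much of $O$ they contain, and then carve $O_1,O_2,O_3$ out of $O$ by selecting subsets of points. Since all members of $\sV$ have the same cardinality, each has $\mu$-measure $1/n^2$. For $W\in\sV$ put $o(W)=\mu(O\cap W)\in[0,1/n^2]$, so that $\sum_{W\in\sV}o(W)=\mu(O)\geq\gamma$. The parts $O_1,O_2,O_3$ do not have to exhaust the available mass of $O$ in the relevant regions; they only have to be disjoint. I will therefore meet each requirement with just enough points and put all leftover points of $O$ into $O_2$, which only helps (ii). Because each member has more than $n$ points, taking a subset of $O\cap W$ realizes any prescribed mass up to a rounding error smaller than $1/n^3$ per member. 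Rounding upward keeps all inequalities in the required direction, so granularity is harmless.

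\emph{Step 1: column $1$.} Enumerate $\sV$ as $W_1,\dots,W_{n^2}$ with $o(W_1)\geq o(W_2)\geq\cdots$. Let the $n-1$ members $V_{2,1},\dots,V_{n,1}$ of the first column be the $n-1$ members of smallest $o$-value. These carry at most a fraction $(n-1)/n^2<1/n$ of the total mass of $O$, so discarding them loses at most $\mu(O)/n$.

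\emph{Step 2: the row $P$.} From the remaining $n^2-n+1$ members, assign to $P$ every $n$-th one in the sorted order, starting with the largest; this gives $n$ members. Call the others \emph{inner} members; they number $(n-1)^2$. Comparing blocks of consecutive members in the sorted list shows that $\mu(O\cap P)\geq \big(\mu(O)-\mu(O)/n\big)/n-1/n^2$, up to an additive $1/n^2$. Since $\gamma n>2c+6$, this is at least $c/n^2$. Take $O_3\subseteq O\cap P$ of measure exactly about $c/n^2$. Note that $P$ may also carry a good deal of $O$, namely up to about $\mu(O)/n+1/n^2$, which is lost as well.

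\emph{Step 3: the inner rows.} Deal the inner members, in decreasing order of $o$, in snake order into rows $2,\dots,n$, columns $2,\dots,n$. Then the row sums $r_i=\mu\big(O\cap\bigsqcup_{j\geq2}V_{i,j}\big)$ differ pairwise by at most $1/n^2$. Their total is
\[
R\geq \mu(O)\Big(1-\tfrac{2}{n}\Big)-\tfrac{2}{n^2}\geq \gamma-\tfrac{3}{n}.
\]
Hence each row satisfies $r_i\geq R/(n-1)-1/n^2$. This exceeds $(1+c)/n^2+(1+c)/n^2$, using $\gamma>(2c+6)/n$, with room to spare. In each row $i$, choose $O_1\cap\bigsqcup_{j\geq2}V_{i,j}$ of measure about $(1+c)/n^2$, which gives (i). Let $O_2$ consist of all remaining points of $O$ in the inner members. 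Then
\[
\mu(O_2)\geq R-(n-1)\tfrac{1+c}{n^2}-\tfrac{1}{n^2}\geq \gamma-\tfrac{3}{n}-\tfrac{1+c}{n}-\tfrac{1}{n^2}.
\]
Since $\mu(P)=1/n$, requirement (ii) becomes $\gamma\geq (c+5)/n+(c+1)/n^2$. This follows from $\gamma>(2c+6)/n$, because $c+1\leq n$.

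\emph{Main obstacle.} The only delicate part is the bookkeeping of the losses. One loss comes from column $1$, one from the mass of $O$ absorbed into $P$ (which may be as large as about $\mu(O)/n+1/n^2$), one from rounding, and one from the per-row imbalance in the snake dealing. All of these must fit within the slack provided by $n>(2c+6)/\gamma$. I would first verify the chain of inequalities above crudely, bounding every loss by a multiple of $1/n$. If the constant $2c+6$ proves too tight, I would alternatively place into $P$ the members of \emph{smallest} $o$ subject to $\mu(O\cap P)\geq c/n^2$. Since $\gamma n>c$, the average member already supplies enough for this, and then the loss into $P$ is only about $c/n^2+1/n^2$.
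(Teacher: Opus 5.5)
Your argument is correct and is essentially the paper's sort-and-deal proof. The paper indexes the members column by column in decreasing order of $\mu(O\cap\cdot)$, so that row $1$ receives every $n$-th member starting from the largest (the same members as your $P$) and all rows are balanced to within $1/n^2$ by the same comparison of consecutive rounds you use for your snake dealing; it then takes $O_3=O\cap P$. The differences are only in bookkeeping: the paper puts the richest rather than the poorest members in column $1$ and splits each $O\cap V_{i,j}$ ($i\geq2$) in half between $O_1$ and $O_2$, absorbing the column-$1$ loss through the crude bound $\mu(O_1\cap V_{i,1})\leq\mu(V_{i,1})=1/n^2$, whereas your choices (poorest members in column $1$, only about $(1+c)/n^2$ per row for $O_1$) give more slack than the constant $2c+6$ requires.
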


\begin{proof}
Take an indexing $\{ V_{i,j} \}_{i,j=1}^n$ of the members of $\sV$ such that 
$(i,j) \mapsto \lambda_{i,j} := \mu (O\cap V_{j,i} )$ 
is a nonincreasing function with respect to the lexicographic order on $\{ 1,\dots , n \}^2$.
Set $O_3 = O \cap P$.
For each $2\leq i\leq n$ and $1\leq j\leq n$ choose a partition $O_{i,j} \sqcup O_{i,j}'$ of $O\cap V_{i,j}$
into two sets whose cardinality differs by at most $1$. Set $O_1 = \bigsqcup_{i=2}^n \bigsqcup_{j=1}^n O_{i,j}$
and $O_2 = \bigsqcup_{i=2}^n \bigsqcup_{j=1}^n O_{i,j}'$. Then we have $O = O_1 \sqcup O_2 \sqcup O_3$.

Set $\lambda_j = \sum_{i=1}^n \lambda_{i,j}$.
Then $\lambda_1 \geq \lambda_2 \geq\dots\geq \lambda_n$ and $\sum_{j=1}^n \lambda_i \geq \gamma$. Combining these, we get 
\begin{equation}\label{lambda1}
\lambda_1 \geq \frac{\gamma}{n}.
\end{equation}
For every $j=1,\dots , n$ we have $\lambda_{i-1,n} \geq \lambda_{i,j}$ for $i=2,\dots , n$ so that
\[
\lambda_n 
\geq \sum_{i=2}^n \lambda_{i,j}
= \lambda_j - \lambda_{1,j}
\geq \lambda_j - \frac{1}{n^2}
\]
and hence
\begin{align}\label{E-lambdan}
\lambda_n 
\geq \frac{1}{n}\sum_{j=1}^n \Big( \lambda_j - \frac{1}{n^2} \Big) 
\geq \frac{\gamma n-1}{n^2}.
\end{align}

For every $i=1,\dots , n$ the difference between the cardinalities of $O_1 \cap \bigsqcup_{j=1}^n V_{i,j}$
and $O_2 \cap \bigsqcup_{j=1}^n V_{i,j}$ is at most $n$, and so the $\mu$-measures of these sets differ
by at most $n/|X|$, which is less than $1/n^2$ since the cardinality of the sets $V_{i,j}$ is greater than $n$
by hypothesis. From (\ref{E-lambdan}) we therefore obtain, for $i=2,\dots , n$ and $k=1,2$,
\[
\mu \Big( O_k \cap \bigsqcup_{j=1}^n V_{i,j} \Big) \geq \frac{\gamma n-2}{2n^2}.
\]
Utilizing our assumption that $n\geq (2c+6)/\gamma$, it follows that
\begin{align*}
\mu \Big(O_1 \cap \bigsqcup_{j=2}^n V_{i,j} \Big) 
&\geq \mu \Big( O_1 \cap \bigsqcup_{j=1}^n V_{i,j} \Big) - \mu (V_{i,1} ) \\
&\geq \frac{\gamma n-2}{2n^2} - \frac{1}{n^2} \\\
&= \frac{1}{n^2} + \frac{\gamma n-6}{2n^2} \\
&\geq \mu (V_{i,1} ) +  \frac{c}{n^2} .
\end{align*}
and
\begin{align*}
\mu ((O_2 \cap V)\setminus V_1 ) 
&\geq \sum_{i=2}^{n} \mu \Big(O_2\cap \bigsqcup_{j=1}^n V_{i,j} \Big) - \sum_{i=2}^n \mu (V_{i,1} ) \\
&\geq (n-1) \cdot\frac{\gamma n-2}{2n^2} - (n-1) \cdot\frac{1}{n^2} \\
&= \frac{(n-1)(\gamma n -4)}{2n^2}\\
&\geq \frac{(n-1)(c+1)}{n^2}\\
&\geq \frac{n + c}{n^2} \\
&= \mu (P) + \frac{c}{n^2} .
\end{align*}
Finally we observe by \eqref{lambda1} and, again, our lower bound on $n$ that
\[
\mu (O_3 \cap P) = \lambda_1 \geq \frac{\gamma}{n} \geq \frac{2c+6}{n^2} >\frac{c}{n^2} .\qedhere
\]
\end{proof}

\section{Proof of Theorem~\ref{T-stable rank one}}\label{S-theorem}

With the lemmas of the last five sections at hand,
we now embark on the proof of Theorem~\ref{T-stable rank one}.
So let $G$ be a subgroup of $\fF (\Gamma,X)$ containing $\fA (\Gamma , X)$,
let $a$ be a noninvertible element of $C^*_\lambda (G)$, and let $\eps > 0$, and let 
us show the existence of an invertible element $\tilde{a} \in C^*_\lambda (G)$
such that $ \| a - \tilde{a} \| < \eps$. We know by Remark~\ref{R-simple} that $C^*_\lambda (G)$ is simple,
and so Lemma~3.5 in \cite{Ror91} permits us to assume that there exists a nonzero positive 
element $b\in C^*_\lambda (G)$ such that $ab = ba = 0$ (this is the first of two places in the proof
where we use the hypothesis of having a specification ridge).

Let $\delta > 0$ be as given by Lemma~\ref{L-zero division} for $\eps/12$ and with respect to $a$ and $b$.
Take $a_0 , b_0 \in \Cb G$ such that $\| a - a_0 \| < \eps /4$
and $\| b - b_0 \| < \delta$.
Then there is an $L\Subset G$ such that we can write
$a_0 = \sum_{s\in L} \alpha_s u_s$ and $b_0 = \sum_{s\in L} \beta_s u_s$ for some scalars $\alpha_s$ and $\beta_s$.
We may assume $L$ to be symmetric and contain $e$.
Take an $h\in G \setminus L^5$ and set $\Omega = L^5 \sqcup \{ h,h^{-1} \}$.
We now run the construction in Section~\ref{S-construction} for $\Omega$ to get a striated clopen tower
$(F,S,C)$ and the associated permutational Bernoulli structure $\fS_F \curvearrowright (\{ 1,\dots , m\}^F , \nu^F )$.
We adopt the notation of Section~\ref{S-construction}, so that $K$ denotes the set $K_0 \cup K_0^{-1} 
\cup \{ e \}$ where $K_0$ is the set
all of elements in $\Gamma$ that locally implement the action of elements of $\Omega$ on members of clopen partitions, 
$F_K$ is the set $\bigcap_{s\in K} s^{-1} F \subseteq F$, 
and $\fA_S$ is abusively used to denote each of the embedded copies of itself in the permutational structure.
We may take $C$ small enough so that each element of $\Omega L \Omega$ acts as a single
element of $\Gamma$ on each level of the tower $(FS,C)$.
The construction in Section~\ref{S-construction} affords us the freedom to take $S$ to be as large in cardinality and
$F$ to be as left invariant as we wish, and also to choose the base $(\{ 1,\dots , m\}^F , \nu^F )$ of the Bernoulli structure, within the limits imposed by the size of $S$. 
We will now proceed to specify all of these parameters.

In view of our choice of $\delta$, 
and writing $\lambda$ for the largest sum of the form $\sum_{i\in I} \nu (\{ i \} )$ for $I\subseteq \{ 1,\dots , m \}$
that does not exceed $1/2$, 
Lemma~\ref{L-zero division} gives us a set $D\subseteq F_K$ with cardinality 
depending only on $b_0$ and $a_0$, a self-adjoint unitary $u\in\Cb G$ supported in
$\Omega \fA_S^D \Omega \fA_S^{D} \Omega\fA_S^{D} \Omega$,
and a cylinder set $O\subseteq_D \{ 1,\dots , m\}^F$ with $\nu^F (O)\geq \lambda^{-|D|}$ such that 
\begin{align}\label{E-O}
\norm{1_Oua}<\frac{\eps}{12} \hspace*{3mm} \text{and}\hspace*{3mm}\norm{au1_O}< \frac{\eps}{12} .
\end{align}
Set $\theta = 3^{-|D|}$.
Take an even integer $d > 0$ such that the integer $n := 2^{d/2}$ is larger than $22/\theta$ (in particular, $\theta /2 \geq 1/(2n^2)$).
Note that $\theta$, $d$, and $n$ do not depend on $(F,S,C)$ or the permutational Bernoulli structure,
which we therefore have the freedom to chose without affecting these quantities. 

Let $\delta'>0$ be as given by Lemma~\ref{L-product sets} with respect to 
the parameters $d$, $1/(2n^4|K|)$, and $|K|^{21}$, respectively.
Since $\Gamma$ is amenable and infinite and the cardinality of $D$ does not depend on the choice of $F$, we can take $F$ to be large enough to satisfy
\begin{equation}\label{E-F}
2^{|F|}\Big(1-\frac{\theta}{2} \Big)>n
\end{equation} 
and  sufficiently left invariant so that there exists a set $E\subseteq F$ satisfying $|E| \geq (1-\delta')|F|$ and
\begin{align}\label{E-ED}
K^{21} E \subseteq F\setminus D .
\end{align}

Now we specify $S$ and the permutational Bernoulli structure. 
These will depend on the cardinality of $F$.
By the representation theory of alternating groups \cite{FulHar91,JamKer81}, 
the Wedderburn-Artin decomposition of the group ring $\Cb\fA_S$
has the form $\Cb \oplus (\bigoplus_{i\in I} M_{k_i} )$
where $k_i \geq |S| - 1$ for every $i\in I$ assuming $|S| \geq 7$.
Write $\trs$ for the canonical tracial state on $\Cb\fA_S$, i.e. the one arising via the left regular representation of $\fA_S$. Then, for all 
$w = (z, (w_i )_{i\in I} ) \in \Cb \oplus (\bigoplus_{i\in I} M_{k_i} ) = \Cb\fA_S$ one has
\[
\trs (w) = \frac{z}{|\fA_S|} + \sum_{i\in I} \frac{k_i^2}{|\fA_S|} \trs_i (w_i ),
\]
where $\trs_i$ is the tracial state on $M_{k_i}$.

Take $S$ large enough so that
\[
\frac{2^{d+1}}{|S|-1}<\left(1-\frac{1}{2n^2}\right)^{1/|F|}.
\]
This allows us to find projections $p_{z,i}\in M_{k_i}$ for all $i\in I$ and $z\in\{-1,1\}^d$ such that $\trs_i(p_{z,i})=\trs_i(p_{z',i})$ for all $z,z'\in\{-1,1\}^d$ and, writing $p_{*,i}\coloneqq 1_{M_{k_i}}-\sum_{z\in\{-1,1\}^d}p_{z,i}$, 
\[
\trs_1(p_{*,i})<\frac12 \bigg(1-\frac{1}{2n^2}\bigg)^{1/|F|} .
\] 
We set $p_z = (0,(p_{z,i})) \in \Cb\fA_S$ for all $z\in\{-1,1\}^d$, and $p_* = (1,(p_{*,i}))\in \Cb\fA_S$. 
Then $\{p_z:z\in\{-1,1\}^d\}$ is a collection of pairwise orthogonal projections of the same trace that
sum to $1-p_*$, where $\trs(p_*)<(1-1/(2n^2))^{1/|F|}$, and that are pairwise Murray--von Neumann equivalent
in $\Cb \fA_S$.

Using these projections to generate a commutative C$^*$-subalgebra 
$N\subseteq \Cb \fA_S \subseteq C^*_\lambda(G)$, 
we obtain what we will take as the base space $\{1,\dots,m\}$ in the construction in Section~\ref{S-construction},
namely $Z\coloneqq \{-1,1\}^d\sqcup\{*\}$ with the measure $\nu$ induced by the canonical trace, which is 
uniformly distributed over $\{-1,1\}^d$ with $\nu(\{ * \})<(1-1/(2n^2))^{1/|F|}$. Therefore, writing $Q_1=(\{-1,1\}^d)^F\subseteq Z^F$ and $\mu=\nu^F$ (the measure induced by the canonical trace on $\Cb\fA_S^F \subseteq C_\lambda^*(\fA(\Gamma,X)$), we have
\[
\mu(Q_1)=\nu(\{-1,1\}^d)^{|F|}\geq 1-\frac{1}{2n^2}.
\]

Consider the element $a_1 := \unit_{O^c} u a_0 u \unit_{O^c}$. We can write
$a_1 = \sum_{s\in L_1} \rho_s u_s$ for some scalars $\rho_s$ where (recalling that 
$u$ is supported in $\Omega \fA_S^{D} \Omega \fA_S^{D} \Omega \fA_S^{D} \Omega$ and that $1_O$, and hence also $1_{O^\comp}$,
is supported in $\fA_S^D$)
\begin{align}
L_1
= (\fA_S^D \Omega )^4 L (\Omega \fA_S^D )^4  \Subset G .
\end{align}
Also, recalling that $u$ is self-adjoint and using \eqref{E-O},
\begin{align*}
\| u a u^* - a_1 \| 
&= \| uau - a_1 \| \\
&\leq \|uau-1_{O^c} u a u 1_{O^c}\|+\|a-a_0\| \\
&= \|1_O u a u 1_O + u a u 1_O + 1_O u a u\| + \|a-a_0\| \\
&< \frac{\eps}{4} + \| \unit_O u a u \| + 2 \| u a u \unit_O \| \\
&\leq \frac{\eps}{4} + \frac{\eps}{12} + 2\cdot \frac{\eps}{12}
= \frac{\eps}{2} .
\end{align*}
Writing $\sI$ and $\sU$ for the groups
of invertible and unitary elements in $C^*_\lambda (G)$, respectively, this yields
\begin{align*}
\inf_{\tilde{a}\in \sI} \| a - \tilde{a} \| 
&= \inf_{\tilde{a}\in \sI} \inf_{u\in\sU} \| a - u^* \tilde{a} u \| \\
&= \inf_{\tilde{a}\in \sI} \inf_{u\in\sU} \| uau^* - \tilde{a} \| \\
&\leq \inf_{\tilde{a}\in \sI} \inf_{u\in\sU} (\| uau^* - a_1 \| + \| a_1 - \tilde{a} \|) \\
&< \frac{\eps}{2} + \inf_{\tilde{a}\in \sI} \| a_1 - \tilde{a} \| 
\end{align*}
It thus now suffices to show that there exists an invertible element $\tilde{a}$ in 
$C^*_\lambda (G)$ such that $\| a_1 - \tilde{a} \| \leq \eps /2$.
We will do this by constructing unitaries $u_1 , u_2 \in C^*_\lambda (G)$ such that $u_1 a_1 u_2$ is nilpotent, which
is sufficient since then $u_1 a_1 u_2 + \eps /2$ is invertible, which in turn implies the invertibility of the element
$a_1 + (\eps /2)u_1^* u_2^* = u_1^* (u_1 a_1 u_2 + \eps /2) u_2^*$, whose norm distance to $a_1$
is at most $\eps /2$. The virtue of working with $a_1$ is that it satisfies the genuine zero division 
\[
a_1 1_O = 1_O a_1 = 0 .
\]

Recall that $K = K_0 \cup K_0^{-1} \cup \{ e \}$ where $K_0$ is the set
all of elements in $\Gamma$ that locally implement the action of elements of $\Omega$ on members of clopen partitions. 
By the definition of $F_K$, for each $s\in K$ the conjugates by $u_s$ of the indicator functions of cylinder sets in $Z^F$ 
supported on $F_K$ are again indicator functions of cylinder sets in $Z^F$ (but no longer necessarily supported on $F_K$)
determined by a partial permutation of $F$ with domain $F_K$, which we extend in some arbitrary
way to a full permutation $\sigma_s$ of $F$. Write $\sK$ for the set of all $\sigma_s$ for $s\in K$.
Note that the cardinality of $\sK$ depends only on $b_0$, which will allow us to apply Lemma~\ref{L-product sets} later.

We identify $Q_1=(\{ -1,1 \}^d )^F$ with $(\{ -1,1 \}^F )^d$ in the obvious way and equip it with the uniform probability measure $\mu_1$.
By our choice of $\delta'$ as supplied by Lemma~\ref{L-product sets},
if for $k=-1,1$ we set
\[
B_k = \big\{ x\in \{ -1,1 \}^F : k\cdot\textstyle\sum_{s\in E} x_s > 0 \big\}
\]
and $A_k = \bigcap_{\omega\in\sK^{21}} \omega^{-1} B_k$,
then the sets $A_\kappa = A_{\kappa (1)} \times\cdots\times A_{\kappa (d)}$
for $\kappa \in \{-1,1 \}^d$
satisfy the following:
\begin{enumerate}
\item\label{I-A1} the measures $\mu_1 (A_\kappa )$ for $\kappa \in \{-1,1 \}^d$ 
are equal to some common value greater than $2^{-d} (1- \theta /2)$,

\item $\mu_1 (\sigma A_\kappa \Delta A_\kappa ) < 1/(2n^4 |K|^{21})$
for all $\sigma\in\fS_F$ and $\kappa \in \{-1,1 \}^d$,

\item the sets $\bigcup_{\omega\in\sK^{21}} \omega A_\kappa$ for 
$\kappa \in \{-1,1 \}^d$ are pairwise disjoint.
\end{enumerate}
Viewing every $A_\kappa$ as a subset of $Z^F$, define $\tilde{A}_\kappa = \pi_E^{-1}(\pi_E(A_\kappa))\subseteq_E Z^F$. Using the above we have

\begin{enumerate}[resume]
\item\label{I-A2} $\mu (\sigma \tilde{A}_\kappa \Delta \tilde{A}_\kappa ) < 1/(2n^4 |K|^{21})$ 
for all $\sigma\in\fS_F$ and $\kappa \in \{-1,1 \}^d$,

\item\label{I-A3} the sets $\bigcup_{\omega\in\sK^{21}} \omega \tilde{A}_\kappa$ for 
$\kappa \in \{-1,1 \}^d$ are pairwise disjoint.
\end{enumerate}
By \eqref{I-A2}, for every $\kappa$ we have, using the fact that $|\sK^{21}| \leq |K|^{21}$,
\begin{align}\label{E-core}
\mu \bigg(\bigcap_{s\in \sK^{21}} s^{-1} \tilde{A}_\kappa \bigg) 
&\geq \mu (\tilde{A}_\kappa ) - \sum_{\sigma\in\sK^{21}} \mu (\sigma \tilde{A}_\kappa \Delta \tilde{A}_\kappa ) \\
&\geq \mu (\tilde{A}_\kappa ) - |K|^{21} \cdot \frac{1}{2n^4 |K|^{21}} \notag \\
&= \mu (\tilde{A}_\kappa ) - \frac{1}{2n^4} . \notag
\end{align}

Define $Q_2 = \bigsqcup_{\kappa\in\{ -1,1\}^d} A_\kappa$.
Write $\mu_2$ for the uniform probability measure on $Q_2$, in which case $\mu_2 (\cdot ) = \mu (Q_2)^{-1} \mu (\cdot )$. 
By \eqref{I-A1} we have
\begin{equation}\label{E-Q2}
\mu (Z^F \setminus Q_2 ) 
= 1- \mu \bigg(\bigsqcup_{\kappa\in\{ -1,1 \}^d} A_\kappa \bigg)\leq 1-\mu_1\bigg(\bigsqcup_{\kappa\in\{-1,1\}^d} A_\kappa\bigg)
\leq \frac{\theta}{2} .
\end{equation}

Observe that the lower bound on $\mu (O)$ that comes from our application of Lemma~\ref{L-zero division}
will be at least $\theta$, since we defined $\theta$ to be $3^{-|D|}$. Recall that $\theta$ depends only on $b_1$. Set $O' = O\cap Q_2$. 
By \eqref{E-Q2} we have 
\begin{align*}
\mu (O' ) 
\geq \mu (O) - \mu (Z^F \setminus Q_2 ) 
\geq \mu (O) - \frac{\theta}{2}
\geq \frac{\theta}{2} .
\end{align*}
Given that $\mu_2$ is the uniform probability measure on $Q_2$ and the sets $A_\kappa$ for $\kappa\in\{ -1,1 \}^d$ 
all have the same $\mu_2$-measure (and thus the same cardinality, which is greater than $n$ by \eqref{I-A1} and \eqref{E-F}), we can then apply
Lemma~\ref{L-combinatorial} using the sets $A_\kappa=\tilde{A}_\kappa\cap Q_1 = \tilde{A}_\kappa\cap Q_2$ 
in order to get a partition $O' = O_1 \sqcup O_2 \sqcup O_3$ and an indexing $\{ V_{i,j} \}_{i,j=1}^n$
of the sets $\tilde{A}_\kappa$ for $\kappa\in \{ -1,1 \}^d$ such that, setting 
\begin{gather*}
V = \bigsqcup_{i=2}^n \bigsqcup_{j=1}^n V_{i,j},\hspace*{4mm}
V_1 = \bigsqcup_{i=2}^n V_{i,1},\hspace*{4mm}
V_2 = V\setminus V_1,\\ 
U_i = \bigsqcup_{j=2}^n V_{i,j},\hspace*{4mm}
P = \bigsqcup_{j=1}^n V_{1,j}
\end{gather*}
we have (using the fact that $V_{i,j}\cap Q_2 = V_{i,j}\cap Q_1$ for all $i,j$)
\begin{enumerate}[resume]
\item\label{I-O1} $\mu_2 (O_1 \cap U_i )
\geq \mu_2 (V_{i,1}\cap Q_1) + \mu (Q_2 )^{-1} n^{-2}$ for every $i=2,\dots , n$,

\item\label{I-O2} $\mu_2 (O_2 \cap V_2 ) > \mu_2 (P\cap Q_1) + \mu (Q_2 )^{-1} n^{-2}$,

\item\label{I-O3} $\mu_2 (O_3 \cap P)\geq \mu (Q_2 )^{-1} n^{-2}$.
\end{enumerate}
 
We define the remainder and $\sK^{21}$-boundary for the set $V$ by
\begin{align}\label{E-RB}
R = Z^F \setminus V \hspace*{5mm} \text{and} \hspace*{5mm} B = V \setminus \bigcap_{s\in \sK^{21}} s^{-1} V . 
\end{align}
From \eqref{E-core} we get
\begin{align}\label{E-B}
\mu (B) 
\leq \mu \bigg( \bigsqcup_{i=2}^n \bigsqcup_{j=1}^n \bigg(V_{i,j} \setminus \bigcap_{s\in \sK^{21}} s^{-1} V_{i,j} \bigg) \bigg) 
&= \sum_{i=2}^n \sum_{j=1}^n \mu \bigg(V_{i,j} \setminus \bigcap_{s\in \sK^{21}} s^{-1} V_{i,j} \bigg) \\
&\leq n(n-1) \cdot \frac{1}{2n^4} 
< \frac{1}{2n^2} . \notag
\end{align}
We claim that
\begin{enumerate}[resume]
\item\label{I-O12} $\mu ((O_1 \cap U_i) \setminus B) > \mu (V_{i,1} )$ for every $i=2,\dots , n$,

\item\label{I-O22} $\mu ((O_2 \cap V_2 )\setminus B) > \mu (R)$,

\item\label{I-O32} $\mu (O_3 \cap R) > \mu (B)$. 
\end{enumerate}
Since $P\subseteq R$, the inequality \eqref{I-O32} follows from \eqref{E-B} and \eqref{I-O3},
recalling that $\mu_2 (\cdot ) = \mu (Q_2)^{-1} \mu (\cdot )$.
Using \eqref{I-O1}, \eqref{E-B}, and the fact that $\mu (Z^F\setminus Q_1 ) \leq 1/(2n)^2$, we get
\begin{align*}
\mu ((O_1 \cap U_i )\setminus B)
&\geq \mu (O_1 \cap U_i ) - \mu (B) \\
&\geq \mu (V_{i,1} \cap Q_1 ) + \frac{1}{n^2} - \frac{1}{2n^2} \\
&\geq \mu (V_{i,1} ) - \mu (Z^F\setminus Q_1 ) + \frac{1}{2n^2} \\
&\geq \mu (V_{i,1} ) ,
\end{align*}
yielding \eqref{I-O12}.
Again using \eqref{E-B} and the fact that $\mu (Z^F\setminus Q_1 ) \leq 1/(2n)^2$, this time along with \eqref{I-O2}, we get
\begin{align*}
\mu ((O_2 \cap V_2 )\setminus B)
&\geq \mu (O_2 \cap V_2 ) - \mu (B) \\
&> \mu (P \cap Q_1 ) + \frac{1}{n^2} - \frac{1}{2n^2} \\
&= \mu (R \cap Q_1 ) + \frac{1}{2n^2} \\
&\geq \mu (R) - \mu (Z^F \setminus Q_1 ) + \frac{1}{2n^2} \\
&\geq \mu (R) ,
\end{align*}
yielding \eqref{I-O22}.

We will next need to invoke strict comparison in a suitable C$^*$-subalgebra of $C^*_\lambda (G)$
in order to implement some Murray--von Neumann subequivalences.
For this we appeal to results of Ozawa from \cite{Oza25} involving C$^*$-selflessness.
The construction in Section~\ref{S-construction} from which our striated tower $(F,S,C)$ was produced
involved the use of Lemma~\ref{L-free products}, which, in addition to yielding the sets $S$ and $C$, 
supplies us with a nontrivial involution in $\fA (\Gamma ,X)$ freely related to $\fA (S,C)$ and
with support contained in $SC$ (this is the second place in the proof where the hypothesis of having a specification ridge 
is being used).
By conjugating this involution with the elements of $F$
we obtain, for each copy of $\fA_S$ in the embedded direct product $\fA_S^F \subseteq \fA (\Gamma ,X)$,
a nontrivial involution in $\fA (\Gamma ,X)$ 
freely related to it and with support contained in the underlying clopen tower.
This means that $\Cb \fA_S^F$ is contained in a C$^*$-subalgebra of 
$C^*_\lambda (\fA (\Gamma ,X))$ of the form $C^*_\lambda ((\fA_S *\Zb_2 )^F )$.
As pointed out after the statement of Theorem~1 in \cite{Oza25}, nonelementary free products 
admit topologically free minimal extreme boundary actions by \cite{FimLeMMooSta22}, and
so by Proposition~15 of \cite{Oza25} these groups satisfy property ${\rm P}_{{\rm PHP}}$ as defined at the
beginning of Section~8 therein. Ozawa moreover observes
that property ${\rm P}_{{\rm PHP}}$ is closed under direct products, and so $(\fA_S *\Zb_2 )^F$ has this
property.
It follows by Theorem~14 of \cite{Oza25} that $C^*_\lambda ((\fA_S *\Zb_2 )^F )$ is selfless
with respect to the canonical tracial state, which is unique (as follows from selflessless \cite[Theorem~3.1]{Rob25} or by 
much earlier results \cite{PasSal79}), and so this C$^*$-algebra has strict comparison by Theorem~3.1 of \cite{Rob25}. The upshot for us is:
\begin{enumerate}[resume]
\item\label{I-comparison} $\Cb \fA_S^F$ is contained in a C$^*$-subalgebra of $C^*_\lambda (G)$
that has strict comparison and a unique tracial state (the restriction of $\tau$).
\end{enumerate}

We define
\begin{align*}
Y_1 = (O_1 \cap V_2)\setminus B , \quad
Y_2 = (O_2 \cap V_2)\setminus B , \quad
Y_3 = O_3 \cap R .
\end{align*}
Since the collections $\{ V_{i,1} \}_{i=2}^{n}$ and $\{ \bigsqcup_{j=2}^n V_{i,j} \}_{i=2}^{n}$ are both disjoint,
by \eqref{I-comparison} and \eqref{I-O12} there exists a partial isometry $z\in C^*_\lambda (G)$ such that 
$z^* z = \unit_{V_1}$ and $zz^* \leq \unit_{Y_1}$ (recall that Cuntz subequivalence is the same as 
Murray--von Neumann subequivalence when restricting to projections, as shown in Section~2 of \cite{Ror92}).
Since $z^* z$ and $zz^*$ are orthogonal projections, the element $u:= (1-z^* z - zz^* ) + z + z^*$
is self-adjoint and unitary, with
\begin{itemize}
\item $u\unit_{V_1} u \leq \unit_{Y_1}$, and in particular $u\unit_{V_1} = \unit_{Y_1} u\unit_{V_1}$,

\item $u\unit_{(V_1 \sqcup Y_1)^\comp} = \unit_{(V_1 \sqcup Y_1 )^\comp}$.
\end{itemize}
Note that by \eqref{I-O12} we can assume that $u$ is in fact implemented \say{row by row}, so that
\begin{itemize}
\item $u1_{V_{i,1}}u\leq 1_{Y_1\cap U_i}$.
\end{itemize}
We similarly use \eqref{I-comparison}, now in conjunction with \eqref{I-O22}) and \eqref{I-O32}, respectively,
to obtain self-adjoint unitaries $v,w\in C^*_\lambda (G)$ such that
\begin{itemize}
\item $v\unit_R v \leq \unit_{Y_2}$, and in particular $v\unit_R = \unit_{Y_2} v\unit_R$,

\item $v\unit_{(R \sqcup Y_2)^\comp} = \unit_{(R \sqcup Y_2 )^\comp}$,
\item $w\unit_B w \leq \unit_{Y_3}$, and in particular $w\unit_B = \unit_{Y_3} w\unit_B$,

\item $w\unit_{(B \sqcup Y_3)^\comp} = \unit_{(B \sqcup Y_3 )^\comp}$.
\end{itemize}
Define $b = uwa_1uv$. We will now construct unitaries $z_1 , z_2 \in \Cb \fA_S^F$
such that $z_1 b z_2$ is nilpotent.

Partition the unit in $C^*_\lambda (G)$ as $\unit_R + \sum_{i=2}^{n} \sum_{j=1}^n \unit_{V_{i,j}}$.
We can correspondingly view elements of $C^*_\lambda (G)$ as $(1 + n(n-1)) \times (1 + n(n-1))$ matrices.
Let us verify in this matrix representation that $b$ takes the following form (illustrated in the case $n=4$), 
where the first row and column correspond to $R$
and the remaining rows and columns correspond to the sets $V_{i,j}$ under the lexicographic order on their indices:
\begin{gather*}
\mbox{\footnotesize 
$\left[
\begin{array}{c | c c c c | c c c c | c c c c}
0 & 0 & \entry & \entry & \entry & 0 & \entry & \entry & \entry & 0 & \entry & \entry & \entry \\
\hline
0 & 0 & 0 & 0 & 0 & 0 & 0 & 0 & 0 & 0 & 0 & 0 & 0\\
0 & 0 & \entry & \entry & \entry & 0 & 0 & 0 & 0 & 0 & 0 & 0 & 0\\
0 & 0 & \entry & \entry & \entry & 0 & 0 & 0 & 0 & 0 & 0 & 0 & 0\\
0 & 0 & \entry & \entry & \entry & 0 & 0 & 0 & 0 & 0 & 0 & 0 & 0\\
\hline
0 & 0 & 0 & 0 & 0 & 0 & 0 & 0 & 0 & 0 & 0 & 0 & 0\\
0 & 0 & 0 & 0 & 0 & 0 & \entry & \entry & \entry & 0 & 0 & 0 & 0\\
0 & 0 & 0 & 0 & 0 & 0 & \entry & \entry & \entry & 0 & 0 & 0 & 0\\
0 & 0 & 0 & 0 & 0 & 0 & \entry & \entry & \entry & 0 & 0 & 0 & 0\\
\hline
0 & 0 & 0 & 0 & 0 & 0 & 0 & 0 & 0 & 0 & 0 & 0 & 0\\
0 & 0 & 0 & 0 & 0 & 0 & 0 & 0 & 0 & 0 & \entry & \entry & \entry\\
0 & 0 & 0 & 0 & 0 & 0 & 0 & 0 & 0 & 0 & \entry & \entry & \entry\\
0 & 0 & 0 & 0 & 0 & 0 & 0 & 0 & 0 & 0 & \entry & \entry & \entry
\end{array}
\right]$} .
\end{gather*}
Since $Y_1 , Y_2 \subseteq O$, we have 
\begin{itemize}
\item $a_1uv\unit_R = a_1u\unit_{Y_2} v \unit_R = a_1\unit_{Y_2} v \unit_R = 0$, giving the first column of zeros,

\item $a_1uv\unit_{V_1} = a_1u\unit_{V_1} = a_1\unit_{Y_1} u\unit_{V_1} = 0$, giving the other $n-1$ columns of zeros,

\item $\unit_{V_1} uwa_1 = \unit_{V_1} u \unit_{Y_1} wa_1 = \unit_{V_1} u \unit_{Y_1} a_1 = 0$, giving the $n-1$ rows of zeros.
\end{itemize}

To confirm the other zero entries, we first verify that, given any subcollection $\sW$ of the sets $V_{i,j}$
and writing $W := \bigsqcup \sW$, we have:
\begin{enumerate}[resume]
\item\label{I-factone} $a_1\unit_W = \unit_{W\sqcup R} a_1 \unit_W$,

\item\label{I-facttwo} $a_1\unit_R= \unit_{R\sqcup B} a_1 \unit_R$.
\end{enumerate}
For this we will use Notation~\ref{N-action} along with the following simple observation:
\begin{enumerate}[resume]
\item\label{I-obstwo} Let $F_0$ and $F_1$ be disjoint subsets of $F$, let $A \subseteq_{F_0} Z^F$, 
and let $g$ be an element of the subgroup $\fA_S^{F_1}$ of $\fA_S^F \subseteq \fA (\Gamma ,X)$. 
Then
\begin{align*}
u_g \unit_A u_g^* = \unit_A .
\end{align*}
\end{enumerate}
Recall that $a_1=\sum_{s\in L_1} \rho_s u_s$ where $L_1 = (\fA_S^D \Omega )^4 L (\Omega\fA_S^D )^4$.
Recall also 
that each element of $\Omega L \Omega$ acts as a single
element of $\Gamma$ on each level of the tower $(FS,C)$.

Given an $s\in L_1$, we can write it as $s_1 t_1 s_2 t_2 \cdots s_7 t_7 s_8$ where $s_1 , \dots , s_8\in \fA_S^D$ 
and $t_1 , \dots , t_7\in \Omega\cup \Omega L \Omega \subseteq \Omega^3$.
Since $E\subseteq F\setminus D$ and $W\subseteq_E Z^F$, 
by \eqref{I-obstwo} we have $u_{s_8}\unit_W u_{s_8}^* = \unit_W$.
Since $K^3 E \subseteq K^{21} E \subseteq F\setminus D$ by \eqref{E-ED}, using Notation~\ref{N-action} 
we can write $u_{t_7} \unit_W u_{t_7}^* = \unit_{t_7 W}$.
Again using that $K^3 E\subseteq K^{21} E \subseteq F\setminus D$, we then have by \eqref{I-obstwo} 
that $u_{s_7}\unit_{t_7 W} u_{s_7}^* = \unit_{t_7 W}$.
Continuing like this using at every stage the fact that $K^{21}E \subseteq F\setminus D$, 
we get for $i$ running from $7$ down to $1$ a set $t_i \cdots t_7 W\subseteq _{K^{3(8-i)}E} Z^F$
such that $u_{t_i}\unit_{t_{i+1} \cdots t_7 W} u_{t_i}^* = \unit_{t_i \cdots t_7 W}$
and $u_{s_i}\unit_{t_i \cdots t_7 W} u_{s_i}^* = \unit_{t_i \cdots t_7 W}$.
Applying all of these equalities in succession and writing $t=t_1 \cdots t_7$, we obtain
\begin{align*}
u_s \unit_W u_s^* = u_t \unit_W u_t^* = \unit_{tW} ,
\end{align*}
with the conjugation being implemented by a permutation of the levels of the tower $(F,SC)$ (as identified with $F$) 
in $\sK^{21}$, which by \eqref{I-A3} means that $tW \subseteq W\sqcup R$. It follows that
\begin{align*}
u_s \unit_W = (u_s \unit_W u_s^* )u_s = \unit_{W\sqcup R} (u_s \unit_W u_s^* )u_s = \unit_{W\sqcup R} u_s \unit_W
\end{align*}
and hence
\begin{align*}
a_1\unit_W 
= \sum_{s\in L_1} \rho_s u_s \unit_W 
= \unit_{W\sqcup R} \sum_{s\in L_1} \rho_s u_s \unit_W
= \unit_{W\sqcup R} a_1 \unit_W ,
\end{align*}
which verifies \eqref{I-factone}. 

The verification of \eqref{I-facttwo} is similar and proceeds as follows. Let $s = s_1 t_1 s_2 t_2 \cdots s_7 t_7 s_8 \in L_1$
be as before. Since $V\subseteq_E Z^F$ we have $R = Z^F \setminus V \subseteq_E F$, and so, using the fact 
that $K^{21} E \subseteq F\setminus D$ as above, we successively produce for $i$ running from $7$ down to $1$ a set
$t_i \cdots t_7 R \subseteq_{K^{3(8-i)} E} Z^F$ such that $u_{t_i}\unit_{t_{i+1} \cdots t_7 R} u_{t_i}^* = \unit_{t_i \cdots t_7 R}$
and $u_{s_i}\unit_{t_i \cdots t_7 R} u_{s_i}^* = \unit_{t_i \cdots t_7 R}$, yielding as before, setting $t=t_1 \cdots t_7$,
\begin{align*}
u_s \unit_R u_s^* = u_t \unit_R u_t^* = \unit_{tR} ,
\end{align*}
with the conjugation being implemented by 
a permutation of the levels of the tower $(F,SC)$ (as identified with $F$) in $\sK^{21}$. 
By the definition of $B$ in \eqref{E-RB} we see that $tR \subseteq R\sqcup B$, in which case
\begin{align*}
u_s \unit_R = (u_s \unit_R u_s^* )u_s = \unit_{R\sqcup B} (u_s \unit_R u_s^* )u_s = \unit_{R\sqcup B} u_s \unit_R
\end{align*}
and hence
\begin{align*}
a_1\unit_R 
= \sum_{s\in L_1} \rho_s u_s \unit_R 
= \unit_{R\sqcup B} \sum_{s\in L_1} \rho_s u_s \unit_R
= \unit_{R\sqcup B} a_1 \unit_R ,
\end{align*}
verifying \eqref{I-facttwo}.

Now let $2\leq i,i' \leq n$ with $i\neq i'$ and $2\leq j,j' \leq n$. Write $\alpha = (i,j)$ and $\beta = (i',j' )$ for brevity.
To complete the picture of the matrix we will check that $\unit_{V_\alpha} uwauv \unit_{V_\beta} = 0$.
To do this we will show that this equality holds when $V_\beta$ is replaced by either $V_\beta \setminus Y_2$ or $V_\beta \cap Y_2$.
We make use of \eqref{I-factone} and \eqref{I-facttwo} for the set $W := \bigsqcup_{k=1}^n V_{i',k}$.
Observe that
\begin{align*}
\unit_{V_\alpha} uw a_1\unit_W
&\overset{\mathclap{\eqref{I-factone}}}{=} \unit_{V_\alpha} uw \unit_{R\sqcup W} a_1\unit_W \\
&= \unit_{V_\alpha} uw \unit_{(R\setminus Y_3 )\sqcup W} a_1\unit_W \\
&= \unit_{V_\alpha} u\unit_{R\sqcup W} w \unit_{(R\setminus Y_3 )\sqcup W} a_1\unit_W \\
&= \unit_{V_\alpha} \unit_{R\sqcup W} u \unit_{R\sqcup W} w \unit_{(R\setminus Y_3 )\sqcup W} a_1\unit_W \\
&= 0
\end{align*}
and
\begin{align*}
1_{V_\alpha}uwa_1 1_R&\overset{\mathclap{\eqref{I-facttwo}}}{=}1_{V_\alpha}uw1_{R\sqcup B}a_1 1_R\\
&=1_{V_\alpha}uw1_{(R\setminus Y_3)\sqcup B}a_1 1_R\\
&=1_{V_\alpha}u1_{R}w1_{(R\setminus Y_3)\sqcup B}a_1 1_R\\
&=1_{V_\alpha}1_{R}w1_{(R\setminus Y_3)\sqcup B}a_1 1_R\\
&=0.
\end{align*}
In particular,
\[
1_{V_\alpha}uwa_11_{W\sqcup R}=0
\]
and thus
\begin{align*}
1_{V_\alpha}uwa_1 uv1_{V_\beta}&=1_{V_\alpha}uwa_1 u1_{V_\beta\sqcup R}v1_{V_\beta}\\
&=1_{V_\alpha}uwa_1 1_{W\sqcup R}u1_{V_\beta\sqcup R}v1_{V_\beta}\\
&=0.
\end{align*}
Therefore the matrix for $b$ has the desired form.

The construction of the commutative C$^*$-subalgebra $N\subseteq \Cb \fA_S$
that gave rise to our Bernoulli base $\{ -1,1 \}^d$ was such that any two of its projections
with the same value on the trace $\tau$ are Murray--von Neumann equivalent by some partial isometry
in $\Cb \fA_S$. Since for each $\kappa\in \{ -1,1\}^d$ the set $\tilde{A}_\kappa$ was defined as
$\pi_E^{-1} \pi_E (A_\kappa )\subseteq Z^F$ and membership in $A_\kappa$ is determined 
by the coordinates over $E$, it follows that the indicator functions of the sets
$\tilde{A}_\kappa$ (i.e., the sets $V_{i,j}$) are pairwise Murray--von Neumann equivalent in 
$\Cb \fA_S^F \cong (\Cb \fA_S )^{\otimes F}$.
Thus for every $\alpha = (i,j)$ and $\beta = (i',j' )$
there is a partial isometry $e_{\alpha , \beta}$ in $\Cb \fA_S^F$ with source projection $1_{V_\beta}$ and
range projection $1_{V_\alpha}$.
Set $\Theta = \{2,\dots , n\} \times \{ 1,\dots , n \}$ and let $\varphi : \Theta \to \{ 1,\dots , n(n-1) \}$
be the bijection that gives the lexicographic ordering to $\Theta$. Pick a permutation $\rho_1'$ 
of $\{1,\dots,n(n-1) \}$ that for every $i=1,\dots , n-1$ shifts the interval
$\{(i-1)(n-1)+1,\dots,i(n-1)\}$ by $i$ to the right, and a permutation $\rho_2'$
of $\{1,\dots , n(n-1) \}$ that for every $i=1,\dots , n-1$ shifts the interval
$\{(i-1)n+2,\dots in\}$ by $n-i-1$ to the right. Set $\rho_1 = \varphi^{-1}\circ\rho_1' \circ\varphi$
and $\rho_2 = \varphi^{-1}\circ\rho_2' \circ\varphi$, which are permutations of $\Theta$.

We now define in $\Cb \fA_S^F$ the two unitaries
\begin{align*}
z_1 = \unit_R + \sum_{\alpha\in\Theta} e_{\alpha , \rho_1 (\alpha )}, \hspace*{5mm}
z_2 = \unit_R + \sum_{\alpha\in\Theta} e_{\alpha , \rho_2 (\alpha )}
\end{align*}
In the illustrative case $n=4$, multiplying $b$ by $z_2$ on the right we obtain a matrix of the form
\begin{gather*}
\mbox{\footnotesize 
$\left[
\begin{array}{c | c c c c | c c c c | c c c c}
0 & 0 & 0 & 0 & \entry & \entry & \entry & \entry & \entry & \entry & \entry & \entry & \entry \\
\hline
0 & 0 & 0 & 0 & 0 & 0 & 0 & 0 & 0 & 0 & 0 & 0 & 0\\
0 & 0 & 0 & 0 & \entry & \entry & \entry & 0 & 0 & 0 & 0 & 0 & 0\\
0 & 0 & 0 & 0 & \entry & \entry & \entry & 0 & 0 & 0 & 0 & 0 & 0\\
0 & 0 & 0 & 0 & \entry & \entry & \entry & 0 & 0 & 0 & 0 & 0 & 0\\
\hline
0 & 0 & 0 & 0 & 0 & 0 & 0 & 0 & 0 & 0 & 0 & 0 & 0\\
0 & 0 & 0 & 0 & 0 & 0 & 0 & \entry & \entry & \entry & 0 & 0 & 0\\
0 & 0 & 0 & 0 & 0 & 0 & 0 & \entry & \entry & \entry & 0 & 0 & 0\\
0 & 0 & 0 & 0 & 0 & 0 & 0 & \entry & \entry & \entry & 0 & 0 & 0\\
\hline
0 & 0 & 0 & 0 & 0 & 0 & 0 & 0 & 0 & 0 & 0 & 0 & 0\\
0 & 0 & 0 & 0 & 0 & 0 & 0 & 0 & 0 & 0 & \entry & \entry & \entry\\
0 & 0 & 0 & 0 & 0 & 0 & 0 & 0 & 0 & 0 & \entry & \entry & \entry\\
0 & 0 & 0 & 0 & 0 & 0 & 0 & 0 & 0 & 0 & \entry & \entry & \entry
\end{array}
\right]$} 
\end{gather*}
and then multiplying $bz_2$ on the left by $z_1$ produces a matrix of the form
\begin{gather*}
\mbox{\footnotesize 
$\left[
\begin{array}{c | c c c c | c c c c | c c c c}
0 & 0 & 0 & 0 & \entry & \entry & \entry & \entry & \entry & \entry & \entry & \entry & \entry \\
\hline
0 & 0 & 0 & 0 & \entry & \entry & \entry & 0 & 0 & 0 & 0 & 0 & 0\\
0 & 0 & 0 & 0 & \entry & \entry & \entry & 0 & 0 & 0 & 0 & 0 & 0\\
0 & 0 & 0 & 0 & \entry & \entry & \entry & 0 & 0 & 0 & 0 & 0 & 0\\
0 & 0 & 0 & 0 & 0 & 0 & 0 & \entry & \entry & \entry & 0 & 0 & 0\\
\hline
0 & 0 & 0 & 0 & 0 & 0 & 0 & \entry & \entry & \entry & 0 & 0 & 0\\
0 & 0 & 0 & 0 & 0 & 0 & 0 & \entry & \entry & \entry & 0 & 0 & 0\\
0 & 0 & 0 & 0 & 0 & 0 & 0 & 0 & 0 & 0 & \entry & \entry & \entry\\
0 & 0 & 0 & 0 & 0 & 0 & 0 & 0 & 0 & 0 & \entry & \entry & \entry\\
\hline
0 & 0 & 0 & 0 & 0 & 0 & 0 & 0 & 0 & 0 & \entry & \entry & \entry\\
0 & 0 & 0 & 0 & 0 & 0 & 0 & 0 & 0 & 0 & 0 & 0 & 0\\
0 & 0 & 0 & 0 & 0 & 0 & 0 & 0 & 0 & 0 & 0 & 0 & 0\\
0 & 0 & 0 & 0 & 0 & 0 & 0 & 0 & 0 & 0 & 0 & 0 & 0
\end{array}
\right]$} .
\end{gather*}
The matrix representation of $z_1 b z_2$ is then strictly upper triangular and hence 
$(z_1 b z_2 )^{1 + n(n-1)} = 0$, so that we obtain the desired nilpotence.

\end{document}